\theoremstyle{plain}
\newtheorem{thm}{Theorem}[section]
\newtheorem{prop}[thm]{Proposition}
\newtheorem{cor}[thm]{Corollary}
\newtheorem{lem}[thm]{Lemma}
\newtheorem{conj}[thm]{Conjecture}
\theoremstyle{definition}
\newtheorem{defn}[thm]{Definition}
\newtheorem{rem}[thm]{Remark}
\newtheorem{quest}{Question}
\DeclareMathOperator{\conv}{conv}
\DeclareMathOperator{\GL}{GL}
\newcommand{\flt}{{\mathrm{Flt}}}
\newcommand{\fltR}{\flt^\R_d}
\newcommand{\lspan}{{\mathrm{span}}}
\newcommand{\width}{{\mathrm{width}}}
\newcommand{\CSR}{{\mathrm{CSR}}}
\newcommand{\SR}{{\mathrm{SR}}}
\newcommand{\C}{{\mathbb{C}}}
\newcommand{\N}{{\mathbb{N}}}
\newcommand{\R}{{\mathbb{R}}}
\newcommand{\Z}{{\mathbb{Z}}}
\newcommand{\rleft}{\mathopen{}\mathclose\bgroup\left}
\newcommand{\rright}{\aftergroup\egroup\right}
\newcommand{\vect}[1]{{\boldsymbol{\mathbf{#1}}}}
\newcommand{\with}{\colon}
\newcommand{\setcond}[2]{\left\{ #1\with #2\right\}}
\newcommand{\va}{{\vect{a}}}
\newcommand{\vb}{{\vect{b}}}
\newcommand{\ve}{{\vect{e}}}
\newcommand{\vu}{{\vect{u}}}
\newcommand{\vp}{{\vect{p}}}
\newcommand{\vv}{{\vect{v}}}
\newcommand{\vw}{{\vect{w}}}
\newcommand{\vx}{{\vect{x}}}
\newcommand{\vo}{\vect{0}}
\newcommand{\pro}[2]{#1(#2)}
\newcommand{\probracket}[2]{(#1)(#2)}
\begin{document}
\selectlanguage{english}


\title{Generalized flatness constants, spanning lattice polytopes, and the Gromov width}

\author[G.\,Averkov]{Gennadiy Averkov}
\address[G.\,Averkov]{BTU Cottbus-Senftenberg\\Platz der Deutschen Einheit 1\\03046 Cottbus\\Germany}
\email{averkov@b-tu.de}

\author[J.\,Hofscheier]{Johannes Hofscheier}
\address[J.\,Hofscheier]{School of Mathematical Sciences\\University of Nottingham\\Nottingham, NG7 2RD\\UK}
\email{johannes.hofscheier@nottingham.ac.uk}

\author[B.\,Nill]{Benjamin Nill}
\address[B.\,Nill]{Faculty of Mathematics\\Otto-von-Guericke Universit\"at Magdeburg\\Postschlie{\ss}fach 4120\\ 39106 Magdeburg\\Germany.}
\email{benjamin.nill@ovgu.de}

\subjclass[2010]{Primary: 52B20; Secondary: 53D05}
\keywords{Lattice polytopes, spanning lattice polytopes, lattice width, flatness constant, Gromov width, symplectic toric manifolds}

\begin{abstract}
  In this paper we motivate some new directions of research regarding the lattice width of convex bodies. We show that convex bodies of sufficiently large width contain a unimodular copy of a standard simplex. This implies that every lattice polytope contains a minimal generating set of the affine lattice spanned by its lattice points such that the number of generators is bounded by a constant which only depends on the dimension. We also discuss relations to recent results on spanning lattice polytopes and how our results could be viewed as the beginning of the study of generalized flatness constants. Regarding symplectic geometry, we point out how the lattice width of a Delzant polytope is related to upper and lower bounds on the Gromov width of its associated symplectic toric manifold. Throughout, we include several open questions.
\end{abstract}

\maketitle{}


\section{Summary}
\label{sec:intro}

In this paper we discuss open problems related to the lattice width (and generalizations) motivated by questions on lattice polytopes and symplectic manifolds. Our main contributions are:

\begin{enumerate}
\item Convex bodies of ``large'' width contain unimodular copies of the standard simplex (Theorem~\ref{thm:flatness}).
\item The existence of ``small'' subsets of lattice points in a lattice polytope, i.e., their size is bounded by a constant that only depends on the dimension, which generate the lattice spanned by all lattice points in the given polytope (Corollary~\ref{cor:min}).
\item Examples of three-dimensional lattice polytopes of arbitrarily large width that are not very ample (Proposition~\ref{prop:example}).
\item Evidence that the lattice width is an upper bound on the Gromov width (Proposition~\ref{equality-widths}).
\item A lower bound on the Gromov width in terms of the lattice width (Corollary~\ref{cor:gw}).
\item A characterization of Gromov width $1$ for Delzant lattice polygons (Proposition~\ref{gw1}).
\end{enumerate}

\subsection*{Organization of the paper}
Section~\ref{sec:convex} discusses our results in the context of the geometry of numbers, Section~\ref{sec:sympl} relates them to symplectic geometry. Finally, Section~\ref{sec:flatproof} gives the proof of Theorem~\ref{thm:flatness}, while Section~\ref{sec:exampleproof} contains the proof of Proposition~\ref{prop:example}.

\subsection*{Acknowledgments}
We are particularly grateful to Milena Pabiniak for introducing us to combinatorial problems regarding the Gromov width and patiently answering our questions. We would also like to thank Gabriele Balletti for useful discussions and computations regarding non-spanning lattice polytopes. The first and last authors are PIs in the Research Training Group Mathematical Complexity Reduction funded by the Deutsche Forschungsgemeinschaft (DFG, German Research Foundation) - 314838170, GRK 2297 MathCoRe. The last author is also partially supported by the Vetenskapsr\aa det grant NT:2014-3991 (as an affiliated researcher with Stockholm University).


\section{The main results from the viewpoint of the geometry of numbers}
\label{sec:convex}

\subsection{Convex bodies of large lattice width}

In order to formulate the main result of this section, let us fix some notation. Recall that a non-empty compact and convex subset $K \subseteq \R^d$ is called a \emph{convex body}. The \emph{width} of a convex body $K \subseteq \R^d$ with respect to a non-zero linear functional $\vu \in \rleft( \R^d \rright)^*=\mathrm{Hom}(\R^d, \R)$ is given as
\[
  \width_\vu(K) \coloneqq \max_{x,y \in K} |\vu(x) - \vu(y)| \text{,}
\]
and the \emph{(lattice) width} of $K$ is defined as
\[
  \width(K) \coloneqq \min_{\vu \in \rleft( \Z^d \rright)^* \setminus \rleft\{
  \mathbf{0} \rright\}} \width_\vu (K) \text{,}
\]
where $(\Z^d)^* = \mathrm{Hom}(\Z^d, \Z)$ denotes the dual lattice.

Here is the classical definition of the {\em flatness constant} in dimension $d$:
\[
  \flt(d) \coloneqq \sup \rleft\{ \width(K) \colon K \subseteq \R^d \; \text{convex body}, K \cap \Z^d = \emptyset \rright\} \text{.}
\]
It is known that $\flt(d) \le O(d^{\frac{3}{2}})$ by \cite{Flatnesspaper}. An explicit upper bound of order $O(d^{\frac{5}{2}})$ 
is given by $\flt(d) \le \sqrt{\frac{(d+1)(2d+1)}{6}} d^{\frac{3}{2}}$ \cite[Theorem~(7.4), (8.3)]{Barvinokbook}. Clearly, $\flt(1) = 1$ while in higher dimensions it is known that $\flt(2)= 1+\frac{2}{\sqrt{3}}$ by \cite{Hurkens} and $\flt(3) \le 4.244$ by \cite{flat3d}. The most recent result known to the authors is \cite[Conjecture 1.1]{FlatDimThree} where it is conjectured that $\flt(3) = 2 + \sqrt{2}$.

Let us recall some more notation. The set $\GL(d,\Z)$ consists of the $d \times d$-matrices with integer coefficients and determinant $\pm 1$. We will call a map $T \colon \R^d \to \R^d,\; \vx \mapsto A \vx + \vb$ with $A \in \GL(d,\Z)$ and $\vb \in \Z^d$ an (affine) {\em unimodular transformation}, respectively, for $\vb \in \R^d$ an (affine) {\em $\R$-unimodular transformation}. For $X \subseteq \R^d$, we call $T(X)$ a \emph{unimodular copy}, respectively, an \emph{$\R$-unimodular copy} of $X$.

We define the $d$-dimensional \emph{standard simplex} as 
\[
  \Delta_d \coloneqq \conv(\vect{0}, \ve_1, \ldots, \ve_d),
\]
where $\ve_1, \ldots, \ve_d$ is the standard basis of $\R^d$. Here is our main observation.

\begin{thm}
  \label{thm:flatness}
  Let $K \subseteq \R^d$ be a convex body. 
  \begin{enumerate}
  \item If $\width(K) \ge 2 \flt(d) d$, then $K$ contains a unimodular copy of the standard simplex.
  \item If $\width(K) \ge \flt(d) d$, then $K$ contains an $\R$-unimodular copy of the standard simplex.
  \end{enumerate}
\end{thm}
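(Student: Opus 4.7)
I propose arguing by induction on the dimension $d$. In the base case $d = 1$ we have $\flt(1) = 1$; in part~(1), $\width(K) \ge 2$ forces $K$ to be an interval of length at least $2$ which therefore contains two consecutive integers (a unimodular copy of $\Delta_1$), while in part~(2), $\width(K) \ge 1$ yields two real points at integer distance ($\R$-unimodular).

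For the inductive step, pick a primitive $\vu \in (\Z^d)^* \setminus \{\mathbf{0}\}$ attaining $\width_\vu(K) = \width(K)$, and after a unimodular change of coordinates assume $\vu = \ve_1^*$. Let $\pi \colon \R^d \to \R^{d-1}$ project onto the last $d - 1$ coordinates. Any primitive $\vu' \in (\Z^{d-1})^*$ extends to the primitive functional $(0, \vu') \in (\Z^d)^*$ whose values on $K$ coincide with those of $\vu'$ on $\pi(K)$, so $\width(\pi(K)) \ge \width(K)$. Using monotonicity $\flt(d-1) \le \flt(d)$ to see that $\flt(d-1)(d-1) \le \flt(d)d$, the inductive hypothesis applied to $\pi(K)$ yields vertices $\vect{q}_0, \ldots, \vect{q}_{d-1} \in \pi(K)$ with differences $\vv_i' \coloneqq \vect{q}_i - \vect{q}_0$ forming a $\Z$-basis of $\Z^{d-1}$. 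To lift these to $K$, I choose $\vp_i \in F_i \coloneqq \pi^{-1}(\vect{q}_i) \cap K$ with $\ve_1^*(\vp_i - \vp_0) \in \Z$, and add a $(d+1)$-st vertex $\vp_d \coloneqq \vp_0 + \ve_1$. Cofactor expansion along the last column of the $d \times d$ matrix with columns $\vp_i - \vp_0$ gives determinant $\pm\det(\vv_1', \ldots, \vv_{d-1}') = \pm 1$, so the vectors $\vp_i - \vp_0$ form a $\Z$-basis of $\Z^d$, giving an $\R$-unimodular copy of $\Delta_d$ in $K$. The unimodular case additionally demands $\vp_0 \in \Z^d$, which forces the common first-coordinate offset to be an integer.

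The main obstacle is ensuring these lifts exist, since the fibres $F_i$ can a priori be too short to contain integer-height points. I would resolve this by applying the inductive hypothesis not to $\pi(K)$ but to the projection $\pi(\tilde K)$ of the inner parallel body $\tilde K \coloneqq K \cap (K - c\ve_1)$, taking $c = 1$ in part~(2) and $c = 2$ in part~(1). Brunn's theorem shows the fibre-length function $L(\vect{q}) \coloneqq \mathrm{length}(\pi^{-1}(\vect{q}) \cap K)$ is concave on $\pi(K)$, so $\pi(\tilde K) = \{L \ge c\}$ is convex; a concavity pinch from the maximum of $L$ then bounds $\width(\pi(\tilde K)) \ge \width(K) - c$, using $L_{\max} = \width(K)$ for the minimum-width direction $\ve_1^*$. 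By monotonicity of $\flt$ and $\flt \ge 1$, $\width(\pi(\tilde K)) \ge 2\flt(d)d - 2 \ge 2\flt(d-1)(d-1)$ (and analogously in part~(2)), so the inductive hypothesis still applies and yields vertices in $\pi(\tilde K)$, whose fibres therefore have length at least $c$ and so accommodate the required integer-height lifts. The extra unit of slack in part~(1) provides simultaneously a lattice point $\vp_0 \in F_0 \cap \Z^d$ and its companion $\vp_0 + \ve_1 \in F_0$, which accounts for the factor $2$ separating parts~(1) and~(2) in the statement.
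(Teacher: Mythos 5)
The overall skeleton of your induction is sound where it is routine: $\width(\pi(K))\ge\width(K)$ is correct, and the lifting step (choosing $\xi_i\in\xi_0+\Z$ in fibres of length $\ge c$, adding $\vp_0+\ve_1$, and the cofactor expansion giving determinant $\pm1$) does produce the desired (\,$\R$-)unimodular simplex \emph{provided} the fibres over the vertices are long. The genuine gap is the claim that they are, i.e.\ the assertions ``$L_{\max}=\width(K)$'' and ``$\width(\pi(\tilde K))\ge\width(K)-c$''. Knowing that $\ve_1^*$ attains the lattice width gives no control whatsoever on the lengths of the chords of $K$ in the direction $\ve_1$: these depend on which complement of $\ker\ve_1^*$ your normalizing unimodular transformation picks, and for the obvious choice they can all be shorter than $c$, so that $\tilde K=K\cap(K-c\ve_1)=\emptyset$ and the induction cannot start. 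Concretely, for an integer $N\ge5$ and small $\epsilon>0$ take the parallelogram
\[
K=\setcond{(x,y)\in\R^2}{0\le y\le N(N-1)(2-\epsilon),\ 0\le x+\tfrac{y}{N}\le 2-\epsilon}.
\]
Writing $s=x+y/N$, one computes $\width_{(n,m)}(K)=|n|(2-\epsilon)+|m-\tfrac{n}{N}|\,N(N-1)(2-\epsilon)$, and minimizing over $(n,m)\in\Z^2\setminus\{\vo\}$ gives $\width(K)=N(2-\epsilon)$, attained by $\vu=\ve_1^*$ (for $N=5$, $\epsilon=0.1$ this is $9.5>2\flt(2)\cdot2\approx8.62$). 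Yet every chord of $K$ in the direction $\ve_1$ has length exactly $2-\epsilon<2$, so $L_{\max}=2-\epsilon\ll\width(K)$ and $\tilde K=\emptyset$ for $c=2$ (a thickness $1-\epsilon$ version kills the case $c=1$ as well). Even in the favourable case the concavity pinch only yields the multiplicative bound $\width(\pi(\tilde K))\ge(1-c/L_{\max})\width(\pi(K))$, which recovers your additive bound only when $L_{\max}\ge\width(K)$ --- exactly the false step.

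What is missing is therefore the real content of the theorem: the existence of $d$ linearly independent \emph{lattice} directions in which $K$ contains long chords (in the example above such a direction is $(1,-N)$, not $\ve_1$). This is precisely what the paper extracts from the inequality $\lambda_d(K-K)\le\mu_d(K)\le\flt(d)/\width(K)$ of Kannan--Lov\'asz: it yields segments $I_1,\dots,I_d\subseteq\frac{1}{2d}K$ with linearly independent integral difference vectors $\vv_i$, whence a parallelepiped $\va+\sum_{i=1}^d[\vo,2\vv_i]\subseteq K$ (resp.\ $\va+\sum_i[\vo,\vv_i]$ in the lattice-translate case), from which a unimodular simplex is extracted by an induction very close to your lifting step. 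So your assembly of the simplex essentially reproves the paper's parallelepiped lemma, but the reduction to long fibres cannot be achieved by projecting along the width-minimizing functional; it requires the covering/successive minima input (or an equivalent basis-reduction argument), and without it the proof does not go through.
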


The proof will be given in Section~\ref{sec:flatproof}. Let us note the following version of Theorem~\ref{thm:flatness}(2) (apply it to $\frac{\flt(d) d}{\width(K)} \cdot K$).

\begin{cor}
  \label{cor:flatness}
  Any convex body $K \subseteq \R^d$ contains an $\R$-unimodular copy of $\frac{\width(K)}{\flt(d) d} \cdot \Delta_d$.
\end{cor}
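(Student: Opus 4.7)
The plan is to deduce the corollary from Theorem~\ref{thm:flatness}(2) by a straightforward rescaling, exactly along the parenthetical hint in the excerpt. First I would dispose of the degenerate case $\width(K)=0$: then $K$ lies in an affine hyperplane, so $\frac{\width(K)}{\flt(d)d}\cdot\Delta_d$ is the single point $\vo$, and any choice $\vp\in K$ gives an $\R$-unimodular copy $\vp = T(\vo)$ via the translation $T(\vx)=\vx+\vp$. So the substance is in the case $\width(K)>0$.

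For $\width(K)>0$, set $\lambda \coloneqq \frac{\flt(d)\,d}{\width(K)}$ and consider $K' \coloneqq \lambda K$. Since the width is a $1$-homogeneous functional of $K$ (every linear functional $\vu$ satisfies $\width_\vu(\lambda K) = \lambda\,\width_\vu(K)$ for $\lambda>0$, and the minimum over nonzero integral $\vu$ is taken with respect to the same lattice), we obtain $\width(K') = \lambda\,\width(K) = \flt(d)\,d$. Thus $K'$ satisfies the hypothesis of Theorem~\ref{thm:flatness}(2), yielding an $\R$-unimodular transformation $T(\vx) = A\vx + \vb$ with $A\in\GL(d,\Z)$ and $\vb\in\R^d$ such that $T(\Delta_d) \subseteq K'$.

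Finally I would scale back. Dividing the inclusion by $\lambda$ gives $\lambda^{-1}T(\Delta_d) \subseteq K$. Writing $\vy = \lambda^{-1}\vx$, we have
\[
  \lambda^{-1} T(\vx) = A(\lambda^{-1}\vx) + \lambda^{-1}\vb = A\vy + \lambda^{-1}\vb \eqqcolon T'(\vy),
\]
so $T'(\vx) \coloneqq A\vx + \lambda^{-1}\vb$ is again an $\R$-unimodular transformation and $T'(\lambda^{-1}\Delta_d) \subseteq K$. Since $\lambda^{-1}\Delta_d = \frac{\width(K)}{\flt(d)\,d}\cdot\Delta_d$, this is exactly the claim.

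There is no real obstacle here: the only things to check are that the width scales linearly and that post-composing an $\R$-unimodular map with a positive scalar homothety (centred at the origin) can be reabsorbed into a new $\R$-unimodular map acting on the rescaled simplex, both of which are immediate from the definitions.
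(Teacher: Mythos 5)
Your proof is correct and is essentially the paper's own argument: the paper proves Corollary~\ref{cor:flatness} precisely by applying Theorem~\ref{thm:flatness}(2) to $\frac{\flt(d)d}{\width(K)}\cdot K$ and scaling back, which is what you carry out in detail. Your explicit treatment of the degenerate case $\width(K)=0$ (where the scaling factor is undefined but the claim is trivial) is a careful addition the paper leaves implicit.
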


\subsection{Lattice-generating subsets of bounded size}  

To a lattice polytope $P$, one associates the semigroup of lattice points in $\Z^{d+1}$ in the cone over $P \times \{1\}$. In this case, there is a unique minimal set of generators of this semigroup,  called its {\em Hilbert basis} (we refer to the book \cite{BGbook} as a pointer to the extensive literature on this topic). Clearly, the size of the Hilbert basis does not accept a bound which only depends upon the dimension. However, if one replaces this semigroup by the subgroup of $\Z^{d+1}$ generated by the lattice points of $P$, then the size of a generating subset of the lattice points of $P$ is bounded by a function in the dimension:
\begin{cor}
  \label{cor:min}
  For any $d \ge 0$, there is a constant $C(d)$ with the following property. Given a $d$-dimensional lattice polytope $P \subseteq \R^d$, there exists a subset $A \subseteq P \cap \Z^d$ with $|A| < C(d)$ that generates the affine lattice spanned by $P \cap \Z^d$.
\end{cor}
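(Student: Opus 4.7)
The plan is induction on $d$, with a dichotomy based on the lattice width of $P$. The base case $d = 0$ is immediate: a lattice polytope in $\R^0$ is a single point, so $C(0) = 2$ suffices. For the inductive step, assume a constant $C(d-1)$ has been established for lattice polytopes of dimension at most $d-1$ in any ambient space.

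First suppose $\width(P) \ge 2 \flt(d) \cdot d$. By Theorem~\ref{thm:flatness}(1), $P$ contains a unimodular copy $T(\Delta_d)$, whose $d+1$ vertices $T(\vo), T(\ve_1), \ldots, T(\ve_d)$ lie in $P \cap \Z^d$. Writing $T(\vx) = A \vx + \vb$ with $A \in \GL(d, \Z)$ and $\vb \in \Z^d$, the affine lattice these vertices span equals $\vb + \Z\langle A \ve_1, \ldots, A \ve_d \rangle = \Z^d$, since the columns of $A$ form a $\Z$-basis of $\Z^d$. As the affine lattice spanned by $P \cap \Z^d$ is a sublattice of $\Z^d$ containing these $d+1$ points, it must coincide with $\Z^d$ and is generated by them.

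Next suppose $\width(P) < 2 \flt(d) \cdot d$, and pick a primitive $\vu \in (\Z^d)^* \setminus \{\vo\}$ realising the width. Since $P$ is a lattice polytope, $m := \min_{x \in P} \vu(x)$ and $M := \max_{x \in P} \vu(x)$ are integers with $M - m < 2 \flt(d) \cdot d$. For each integer $k$ with $m \le k \le M$, the slice $P_k := P \cap \{x : \vu(x) = k\}$ is a lattice polytope in the affine hyperplane $H_k := \{x : \vu(x) = k\}$. Primitivity of $\vu$ lets us identify $H_k \cap \Z^d$ with $\Z^{d-1}$ via an $\R$-unimodular transformation, so $P_k$ becomes a lattice polytope of dimension at most $d-1$. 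Applying the inductive hypothesis yields subsets $A_k \subseteq P_k \cap \Z^d$ with $|A_k| < C(d-1)$ that generate the affine lattice spanned by $P_k \cap \Z^d$. Setting $A := \bigcup_k A_k$, every lattice point of $P$ lies in some $P_k$, hence in the affine span of $A_k \subseteq A$, so $A$ generates the affine lattice spanned by $P \cap \Z^d$; its size satisfies $|A| \le (M - m + 1) \cdot C(d-1) \le 2 \flt(d) \cdot d \cdot C(d-1)$.

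Choosing $C(d) := \max\{d+2,\; 2 \flt(d) \cdot d \cdot C(d-1) + 1\}$ closes the induction. The only mildly subtle point is that some slices $P_k$ may have dimension strictly less than $d-1$; this is absorbed by phrasing the induction hypothesis for lattice polytopes of dimension \emph{at most} $d-1$ (which reduces, after a further $\R$-unimodular map to the affine hull of the slice, to the statement in a smaller ambient dimension). No substantial obstacle is anticipated beyond this bookkeeping.
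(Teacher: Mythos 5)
Your proposal is correct and follows essentially the same route as the paper's proof: induction on $d$ with the dichotomy $\width(P)\ge 2\flt(d)\,d$ (where Theorem~\ref{thm:flatness}(1) supplies $d+1$ lattice points affinely spanning $\Z^d$) versus slicing along a width-realising functional and taking the union of inductively obtained generating sets for the slices. The only quibble is your closing constant: since $M-m$ is an integer strictly below $2\flt(d)\,d$, you can only conclude $M-m+1< 2\flt(d)\,d+1$ (not $M-m+1\le 2\flt(d)\,d$, as $2\flt(d)\,d$ need not be an integer), so you should take $C(d)=(2\flt(d)\,d+1)\,C(d-1)$ --- exactly the paper's $\prod_{k=1}^d(2\flt(k)\cdot k+1)$ --- but this is immaterial for the statement, which only asserts the existence of some constant.
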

\begin{proof}
  We set $C(d) \coloneqq \prod_{k=1}^d (2 \flt(k) \cdot k + 1)$ for $d \ge 1$ and $C(0) \coloneqq 2$. Since $\flt(d)\ge1$ for $d \ge1$ (consider the unit cube),  we have $C(d) > d + 1$.
    
  We do induction on $d \ge 0$. The base cases $d=0, 1$ straightforwardly follow (clearly $C(0) = 2$ and $C(1) = 3$ work). So, let $d \ge 2$. If $\width(P) \ge 2 \flt(d) d$, then by Theorem~\ref{thm:flatness}(1) we can choose $d+1$ lattice points from $P$ that span the affine lattice generated by $P \cap \Z^d$ (note that $d+1 < C(d)$). Otherwise, we may assume that $P \cap \Z^d \subseteq \Z^{d-1} \times \{0, \ldots, w\}$ with $w \coloneqq \width(P) < 2 \flt(d) d$. Consider the maximal subset $J \subseteq \{0, \ldots, w\}$ such that $P_j \coloneqq \conv(\setcond{(x_1,\ldots,x_d) \in P \cap \Z^d}{x_d=j}) \subseteq \R^{d-1} \times \{j\}$ define non-empty lattice polytopes for every $j \in J$ (\emph{non-empty} is meant in the set theoretic sense). Then by our induction hypothesis, for any $j \in J$ there exist subsets $A_j \subseteq P_j \cap \Z^d$ with $|A_j| < C(\dim(P_j))$ that generate the affine lattices spanned by $P_j \cap \Z^d$. Clearly, $A \coloneqq \bigcup_{j \in J} A_j$ is a generating set of the affine lattice spanned by $P \cap \Z^d$. We observe $|A| < (w+1) C(d-1) < (2 \flt(d) d +1) C(d-1) = C(d)$.
\end{proof}

In \cite{Seboe-Hilbert, Bruns-Cara}, the {\em Carath\'eodory rank} of a cone is defined as the minimal $n \in \N$ such that every lattice point in the semigroup of lattice points in the cone can be written as a nonnegative integral combination of at most $n$ Hilbert basis elements. In \cite{GubeladzeNormal}, Gubeladze extends the Carath\'eodory rank to polytopes by considering the cone over the polytope and then reducing the study to the cone case. Now, Corollary~\ref{cor:min} motivates the following related definition (where we use the linear instead of the affine setting in order to stress the analogy).

\begin{defn}
  \label{cara:def}
  Let $P \subseteq \R^d$ be a $d$-dimensional lattice polytope, and $\Lambda \subseteq \Z^{d+1}$ the lattice spanned by the lattice points in $P \times \{1\}$.
  \begin{itemize}
  \item The \emph{Carath\'eodory spanning rank} $\CSR(P)$ of $P$ is defined as the minimal $n \in \N$ such that every lattice point in $\Lambda$ can be written as an integer combination of at most $n$ lattice points in $P \times \{1\}$.
  \item The \emph{spanning rank} $\SR(P)$ of $P$ is defined as the minimal size of a lattice generating set of $\Lambda$ contained in $P \times \{1\}$.
  \end{itemize}
  Similar to the flatness constant $\flt(d)$, we introduce the \emph{(maximal) spanning rank in dimension $d$}, $\SR(d)$, as the maximal spanning rank $\SR(P)$ of $d$-dimensional lattice polytopes.
\end{defn}

We remark that $\CSR(P) \le \SR(P) \le (d+1) \CSR(P)$ (for the second inequality consider a lattice basis of $\Lambda$). Clearly, $\SR(1)=2$ and $\SR(2)=3$ where the second equality follows by the fact that empty lattice polygons are unimodular. In dimension $d=3$, it can be deduced from \cite[Theorem~1.7]{nonspanning} that $\SR(3)=5$.

\begin{rem}
  Note that from the proof of Corollary \ref{cor:min} it directly follows that for $d\ge1$ we have $\SR(d) \le \prod_{k=1}^d(2\flt(k)\cdot k+1)$. Recall from above that $\flt(k) \le O(k^{3/2})$, i.e., there is a positive constant $M$ such that $\flt(k) \le M k^{3/2}$. We may assume $M\ge1$. Then, we get the  upper bound
  \[
    \SR(d) \le \prod_{k=1}^d(2\flt(k)\cdot k+1) \le \prod_{k=1}^d (2Mk^{\tfrac{5}{2}}+1) \le \prod_{k=1}^d 4Mk^{\tfrac{5}{2}} \le (4M)^d (d!)^{\tfrac{5}{2}} \text{.}
  \]
\end{rem}

\begin{quest}
  What is the asymptotical order of $\SR(d)$?
\end{quest}

\begin{quest}
  Is there an efficient algorithm to find $\SR(P)$ (respectively, $\CSR(P)$) for a given lattice polytope $P$?
\end{quest}

\subsection{Bounding the width of non-spanning lattice polytopes}

A lattice polytope $P \subseteq \R^d$ is called {\em spanning} if every lattice point in $\Z^d$ is an affine integral combination of lattice points in $P$. In the notation of Definition~\ref{cara:def}, this is equivalent to $\Lambda=\Z^{d+1}$. Spanning lattice polytopes are in the focus of current research in classifications and Ehrhart theory of lattice polytopes as they form a large class of lattice polytopes that have nice Ehrhart-theoretic behavior \cite{spanning1, spanning2, nonspanning}. On the other side, non-spanning lattice polytopes are quite exceptional, for instance, they include the much-studied class of empty lattice simplices. In \cite{nonspanning} building upon \cite{Monica}, a complete classification of all non-spanning three-dimensional lattice polytopes was achieved. In particular, it follows from their main result (\cite[Theorem~1.3]{nonspanning}) that the width of a non-spanning lattice polytope of dimension $d=3$ is at most $3$ (compare this with the maximal width $1$ of empty lattice tetrahedra). Here, it follows from Theorem~\ref{thm:flatness}(1) that such an upper bound exists in any dimension.

\begin{cor}
  \label{cor-span}
  Any $d$-dimensional lattice polytope $P \subseteq \R^d$ with $\width(P) \ge 2 \flt(d) d$ is spanning.
\end{cor}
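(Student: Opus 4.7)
The plan is to reduce Corollary~\ref{cor-span} directly to Theorem~\ref{thm:flatness}(1). Since $\width(P)\ge 2\flt(d)d$, Theorem~\ref{thm:flatness}(1) gives a unimodular transformation $T(\vx)=A\vx+\vb$ with $A\in\GL(d,\Z)$ and $\vb\in\Z^d$ such that $T(\Delta_d)\subseteq P$. In particular, the lattice points $\vb = T(\vo)$ and $T(\ve_i) = A\ve_i+\vb$ for $i=1,\dots,d$ all lie in $P\cap\Z^d$.

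The key observation is that the vertices $\vo,\ve_1,\dots,\ve_d$ of $\Delta_d$ form an affine lattice basis of $\Z^d$, i.e., every $\vect{m}\in\Z^d$ can be written as $\vect{m} = \vo + \sum_{i=1}^d m_i(\ve_i-\vo)$ with $m_i\in\Z$. Applying the affine unimodular map $T$, which is a bijection from $\Z^d$ to itself, the images $T(\vo),T(\ve_1),\dots,T(\ve_d)$ are likewise an affine lattice basis of $\Z^d$: indeed, the differences $T(\ve_i)-T(\vo) = A\ve_i$ form a $\Z$-basis of $\Z^d$ because $A\in\GL(d,\Z)$. Hence every point of $\Z^d$ is an affine $\Z$-combination of the lattice points $T(\vo),T(\ve_1),\dots,T(\ve_d)\in P\cap\Z^d$, and therefore $P$ is spanning.

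There is essentially no obstacle here; the statement is a one-step corollary once Theorem~\ref{thm:flatness}(1) is in place. The only point worth spelling out is why a unimodular image of an affine basis of $\Z^d$ is again an affine basis, which is immediate from the definition of unimodular transformation.
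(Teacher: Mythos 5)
Your proof is correct and follows exactly the route the paper intends: Corollary~\ref{cor-span} is stated as an immediate consequence of Theorem~\ref{thm:flatness}(1), and you have simply spelled out the (standard) detail that the vertices of a unimodular copy of $\Delta_d$ form an affine lattice basis of $\Z^d$, so that $P$ is spanning.
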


\begin{rem}
  Having a unimodular copy of the standard simplex is in general stronger than being spanning, however, at least in lower dimensions surprisingly not by much. Theorem~1.7 in \cite{nonspanning} shows that there are only two spanning lattice $3$-polytopes that do not contain a unimodular copy of the standard simplex.
\end{rem}

Note that for $d=3$, Corollary~\ref{cor-span} implies that the width of a non-spanning lattice polytope is at most 25 by using the bound from \cite{flat3d}. Compare this to the sharp bound of $3$. It would be interesting to know the correct asymptotical order.

\begin{quest}
  What is the maximal width of a non-spanning lattice polytope in dimension $d$?
\end{quest}

We remark that the more classical situation of large dilations (instead of large width) is much easier and completely understood.

\begin{prop}
  Let $P \subseteq \R^d$ be a $d$-dimensional lattice polytope. Then $k P$ is spanning for $k \ge \lfloor \frac{d+1}{2}\rfloor$.
\end{prop}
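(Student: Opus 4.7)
The plan is to reduce to a lattice simplex and then exploit an involution on the fundamental parallelepiped. Since $P$ is $d$-dimensional, it has $d+1$ affinely independent lattice vertices $v_0, v_1, \ldots, v_d$; setting $S \coloneqq \conv(v_0, \ldots, v_d) \subseteq P$ yields $kS \subseteq kP$, so it suffices to prove that $kS$ is spanning. After translating (spanning is translation-invariant) I may assume $v_0 = \vo$, and I set $w_i \coloneqq v_i$ and $L \coloneqq \Z w_1 + \cdots + \Z w_d$, a sublattice of $\Z^d$ of finite index. Every coset of $L$ in $\Z^d$ has a unique representative $p_c = \sum_i \lambda_i^{(c)} w_i$ in the half-open parallelepiped $\Pi \coloneqq \setcond{\sum_i \lambda_i w_i}{0 \le \lambda_i < 1}$; define $h(p_c) \coloneqq \sum_i \lambda_i^{(c)} \in [0,d)$.

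The first step is to characterize which cosets $c$ admit a lattice representative in $kS$. Writing $kS = \setcond{\sum_i \mu_i w_i}{\mu_i \ge 0,\ \sum_i \mu_i \le k}$, any lattice point in coset $c$ has the form $p_c + \sum_i a_i w_i$ with $a_i \in \Z$, and it lies in $kS$ precisely when $a_i \ge 0$ for all $i$ and $\sum_i a_i \le k - h(p_c)$. Such $a_i$ exist if and only if $h(p_c) \le k$.

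The second step is the reflection: the opposite coset $-c$ has canonical representative $p_{-c}$ whose $i$-th coefficient equals $1 - \lambda_i^{(c)}$ where $\lambda_i^{(c)} > 0$ and $0$ where $\lambda_i^{(c)} = 0$. Letting $s(p_c) = \#\setcond{i}{\lambda_i^{(c)} > 0}$ be the support size, this yields $h(p_c) + h(p_{-c}) = s(p_c) \le d$, so $\min\{h(p_c), h(p_{-c})\} \le d/2$. Hence for any integer $k \ge \lceil d/2 \rceil = \lfloor (d+1)/2 \rfloor$, at least one of the cosets $c, -c$ is represented in $kS$. The subgroup of $\Z^d/L$ generated by the represented cosets is automatically closed under negation, so it must equal all of $\Z^d/L$. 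Combined with the observation that the affine lattice spanned by $kS \cap \Z^d$ already contains $L$ (via $\vo, w_1, \ldots, w_d \in kS$), this forces it to be $\Z^d$, proving that $kS$ (and hence $kP$) is spanning.

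The step that warrants the most care is the reflection formula at the boundary of $\Pi$: coordinates with $\lambda_i^{(c)} = 0$ remain fixed, while the strictly positive ones are replaced by $1 - \lambda_i^{(c)}$. It is precisely the pairing $c \leftrightarrow -c$ (which works because a \emph{subgroup} of $\Z^d/L$ is in particular closed under negation) that drops the naive bound of order $d-1$ (from requiring $h(p_c) \le k$ for every coset $c$) down to the sharper value $\lfloor (d+1)/2 \rfloor$.
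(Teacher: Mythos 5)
Your proof is correct and follows essentially the same route as the paper: reduce to a lattice simplex and pair each coset of the vertex lattice with its negative via the point symmetry of the fundamental parallelepiped, so that one of the two representatives has height at most $d/2$. The paper phrases this with the closed parallelepiped over $P \times \{1\}$ in $\R^{d+1}$ and the symmetry $\vx \mapsto \vv_0 + \cdots + \vv_d - \vx$, whereas you work with the half-open parallelepiped in $\R^d$ and handle the boundary coefficients explicitly, but the underlying argument is identical.
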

\begin{proof}
  We may assume that $P$ is a lattice simplex (otherwise triangulate $P$ and use one of the simplices in the triangulation). Consider the closed parallelepiped $\Pi$ spanned by $P \times \{1\} \subseteq \R^{d+1}$ (see e.g. \cite{BeckBook}). The lattice $\Z^{d+1}$ is spanned by all the lattice points in $\Pi$ which includes the vertices $\vv_0, \ldots, \vv_d$ of $P \times \{1\}$. As $\Pi$ has the symmetry $\vx \mapsto \vv_0 + \ldots + \vv_d - \vx$, we see that $\Z^{d+1}$ is already spanned by all the lattice points in $\Pi$ with last coordinate $\le \frac{d+1}{2}$. From this the statement follows.
\end{proof}

The previous bound is sharp in any odd dimension $d \ge 3$: Consider for instance the unique empty lattice simplex of normalized volume $2$ (e.g., see \cite{smallvolume}).

\subsection{Further properties of lattice polytopes of large width?}

It is tempting to conjecture that lattice polytopes of large width satisfy even stronger properties than spanning such as \emph{IDP} (integrally-closed) or \emph{very ample} (we refer to \cite{BGbook} for the precise definitions). However, this is not true.

\begin{prop}
  \label{prop:example}
  For any dimension $d \ge 3$ and any integer $k \ge 3$ there exists a $d$-dimensional lattice polytope $P \subseteq \R^d$ of width $k$ such that for any integer $t \ge 2$ there is a lattice point in $t P$ that is not the sum of $t$ lattice points in $P$.
\end{prop}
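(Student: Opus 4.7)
The plan is to construct the polytope explicitly in dimension three first and then lift to $d \ge 4$ via a Cartesian product with a cube. For the three-dimensional case, I would aim for a lattice polytope $P_k$ satisfying both (i) $\width(P_k) = k$ and (ii) a nontrivial congruence constraint: the third coordinate of every lattice point of $P_k$ lies in a proper subset of $\{0, 1, \ldots, k\}$ whose $t$-fold sumset avoids some residue modulo $k$ for every $t \ge 2$. The simplest candidate is a lattice simplex $P_k = \conv(\vo, k\ve_1, k\ve_2, v)$ whose apex $v$ has third coordinate $k$, with the first two coordinates of $v$ chosen so that the two slant facets through $v$ cut off the lattice points in the interior ``column'' above the base triangle.

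Given a suitable $P_k$, verifying (i) would reduce to a short case analysis: for any primitive $u = (a, b, c) \in \Z^3$, evaluating $u$ at the four vertices yields $0$, $ak$, $bk$ and $\langle u, v \rangle$, and one shows that the spread of these four numbers is at least $k$ by splitting on the sign of $c$ and on the sign of $a + b$. Establishing (ii) would require enumerating $P_k \cap \Z^3$ face by face, using that the two slant facets through the apex combined with integrality restrict the admissible third coordinates. Granted (ii), the non-very-ampleness follows by a direct congruence argument: for each $t \ge 2$ I would exhibit a specific lattice point $x_t \in tP_k$ (verified to lie in $tP_k$ via the facet inequalities of $tP_k$) whose third coordinate falls outside the $t$-fold sumset of the allowed coordinates; any decomposition $x_t = v_1 + \ldots + v_t$ with $v_i \in P_k \cap \Z^3$ would then yield a direct contradiction.

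For $d \ge 4$, I would set $P \coloneqq P_k \times [0, k]^{d-3}$. Since the lattice width of a Cartesian product equals the minimum of the widths, $\width(P) = \min(\width(P_k), k) = k$. The obstruction also lifts: any decomposition of the point $(x_t, \vo) \in tP$ as a sum of $t$ lattice points of $P$ would project onto the first three coordinates to yield a forbidden decomposition of $x_t$ in $tP_k$. The principal obstacle is arranging (i) and (ii) simultaneously in dimension three: the obvious candidate $v = \ve_1 + \ve_2 + k\ve_3$ admits the intermediate lattice points $(1, 1, z)$ for $1 \le z < k$, which spoil the congruence. Ruling these out likely requires either an apex with some negative coordinate in the $\ve_1$--$\ve_2$ plane so that the slant facets become sufficiently steep, or a polytope with more than four vertices designed so that lattice points above the base are pinned to a single height.
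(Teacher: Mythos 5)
You reach the paper's reduction to $d=3$ (Cartesian product with $[0,k]^{d-3}$ and projecting a putative decomposition back to the first three coordinates), and that part is fine. The gap is that the three-dimensional polytope, which is the entire content of the proposition, is never produced: you name a candidate family $\conv(\vo,k\ve_1,k\ve_2,v)$, observe yourself that the obvious choice of apex fails, and defer the fix to an unspecified ``steeper'' apex or a polytope with more vertices. Moreover, the mechanism you want --- an obstruction carried by the third coordinate alone --- cannot be made to work over a $k\times k$ base: the slice of such a simplex at height $z$ is a real translate of $(k-z)\cdot\conv(\vo,\ve_1,\ve_2)$, and every real translate of $2\cdot\conv(\vo,\ve_1,\ve_2)$ contains a lattice point (for the triangle $\{x\ge c_1,\ y\ge c_2,\ x+y\le c_1+c_2+2\}$ take $(\lceil c_1\rceil,\lceil c_2\rceil)$). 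Hence every height in $\{0,\dots,k-2\}$ is attained by a lattice point of $P$, the set of attained heights can miss at most $k-1$, and you are forced into a very delicate analysis near the apex with no guarantee that width $k$ can be maintained.

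The paper sidesteps this with $P=\conv(\{(3,0,-1),(0,2,-1)\}\cup[0,k]^3)$, of width $k$, where the height coordinate serves only as a \emph{selector}, not as the obstruction itself: the target point is $p=(3t-4,1,1-t)$, and since the heights of lattice points of $P$ lie in $\{-1,0,\dots,k\}$, writing $1-t$ as a sum of $t$ such heights forces $t-1$ summands of height $-1$ (hence from $\{(3,0,-1),(0,2,-1)\}$) and one summand of height $0$ (hence from $\{0,\dots,k\}^2\times\{0\}$); the actual contradiction then lives in the first two coordinates, where $(3t-4,1)$ is not in the resulting sumset because any use of $(0,2)$ overshoots the second coordinate while using only $(3,0)$ overshoots the first. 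If you want to salvage your plan, this two-stage structure --- one coordinate pinning down which generators may appear, the remaining coordinates carrying the arithmetic obstruction --- is the missing idea.
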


The proof will be given in Section~\ref{sec:exampleproof}. Recall Gubeladze's work \cite{long-edges} where he proved that lattice polytopes with sufficiently long edges are IDP. The previous observation shows that this cannot be generalized to lattice polytopes of sufficiently large width. From Gubeladze's result which was motivated by Oda's conjecture (which asks whether Delzant lattice polytopes are IDP), the following weaker question emerges (recall that a polytope is \emph{Delzant} if all its normal cones are unimodular):

\begin{quest}
  Is there a constant $N(d)$ such that any $d$-dimensional Delzant lattice polytope with width at least $N(d)$ is IDP?
\end{quest}

Another main conjecture in this field is the question whether IDP implies unimodality of the $h^*$-vector (we refer to \cite{Schepers}). In particular, it is expected that lattice polytopes with sufficiently long edges have a unimodal $h^*$-vector. This motivates the following question.

\begin{quest}
  Is there an infinite family of $d$-dimensional lattice polytopes with arbitrarily large widths that all have a non-unimodal $h^*$-vector?
\end{quest}

\subsection{Generalized flatness constants}
 
Finally, we would like to rephrase Theorem~\ref{thm:flatness} in terms of generalized flatness constants, as this seems to us a promising unifying approach to several of the above questions.

\begin{defn}
  \label{def:flatness}
  For a bounded subset $X \subseteq \R^d$, we define the \emph{flatness constant} with respect to $X$ by
  \[
    \flt_d(X) \coloneqq \sup \rleft\{ \width(K) \with K \subseteq \R^d \;
    \text{convex body}, K  \text{ does not contain a unimodular copy of} \; X
    \rright\} \text{,}
  \]
  and the \emph{$\R$-flatness constant} with respect to $X$ by
  \[
    \fltR(X) \coloneqq \sup \rleft\{ \width(K) \with K \subseteq \R^d \;
    \text{convex body}, K  \text{ does not contain an $\R$-unimodular copy of} \; X
    \rright\} \text{.}
  \]
\end{defn}

For $X = \rleft\{ \vect{0} \rright\} \subseteq \R^d$, we recover the usual flatness constant, i.e.,
\[
  \flt_d(\rleft\{ \vect{0} \rright\}) = \flt(d) \text{.}
\]
We remark that $\fltR(X) \le \flt_d(X)$, and both generalized flatness constants are monotone with respect to inclusion. $\flt_d$ is invariant under unimodular transformations while $\fltR$ is invariant under $\R$-unimodular transformations. Moreover, it is straightforward to show that $\flt_d^\R(nX) = n\flt_d^\R(X)$ for any positive real number $n$ while the analogous statement for $\flt_d(\cdot)$ is a priori not clear.

Theorem~\ref{thm:flatness} implies that these generalized flatness constants are real numbers.

\begin{cor}
  Let $X \subseteq \R^d$ be a bounded subsets which fits in a unimodular copy of $n \cdot \Delta_d$. Then $\flt_d(X) \le 2nd \cdot \flt(d)$ and $\fltR(X) \le nd \cdot \flt(d)$.
\end{cor}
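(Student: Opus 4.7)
The plan is to reduce the corollary directly to Theorem~\ref{thm:flatness} by a scaling argument, with no new geometric input required. Let $K \subseteq \R^d$ be a convex body with $\width(K) \ge 2nd \cdot \flt(d)$. I would first show that $K$ contains a unimodular copy of $n\Delta_d$, and then transfer this to a unimodular copy of $X$ by using the hypothesis that $X$ fits into a unimodular copy of $n\Delta_d$.

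For the first step, I would consider the rescaled body $K' \coloneqq \tfrac{1}{n} K$, which has $\width(K') = \tfrac{1}{n}\width(K) \ge 2d\cdot\flt(d)$. Theorem~\ref{thm:flatness}(1) then yields an affine unimodular transformation $T(\vx)=A\vx+\vb$ with $A \in \GL(d,\Z)$ and $\vb \in \Z^d$ such that $T(\Delta_d) \subseteq K'$. Multiplying by $n$, the inclusion $n\cdot T(\Delta_d) = A(n\Delta_d)+n\vb \subseteq K$ holds, and since $n\vb \in \Z^d$ the map $\vx\mapsto A\vx+n\vb$ is itself an affine unimodular transformation. Hence $K$ contains a unimodular copy of $n\Delta_d$.

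For the second step, pick an affine unimodular $S$ with $X \subseteq S(n\Delta_d)$, guaranteed by assumption. Writing the unimodular copy of $n\Delta_d$ inside $K$ from above as $T'(n\Delta_d)$, the composition $T'\circ S^{-1}$ is affine unimodular and sends $X$ into $K$. This shows $\flt_d(X)\le 2nd\cdot\flt(d)$. The bound $\fltR(X)\le nd\cdot\flt(d)$ follows by the identical argument using Theorem~\ref{thm:flatness}(2) in place of (1); in the $\R$-unimodular setting the translation vector is no longer required to be integer, so the mild check that $n\vb \in \Z^d$ is unnecessary.

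There is no substantive obstacle here: the corollary is essentially a scaled reformulation of Theorem~\ref{thm:flatness}, combined with composition of unimodular maps. The only point to verify is that scaling an affine unimodular transformation by the positive integer $n$ preserves integrality of the translation part, which is immediate. Everything else is a routine composition of maps.
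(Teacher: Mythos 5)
Your argument is correct and is precisely the reduction the paper leaves implicit: the corollary is stated there without proof as an immediate consequence of Theorem~\ref{thm:flatness}, and your scale-by-$n$, apply-the-theorem, then compose-with-$S^{-1}$ argument is the intended one. The only point worth flagging is that your step $n\vb\in\Z^d$ uses that $n$ is a positive integer (the natural reading of the statement); for non-integral real $n$ the integral-translation bound would instead require something like the paper's later lemma relating $\flt_d(\cdot)$ to $\fltR(\cdot+[0,1]^d)$, while your $\fltR$ bound goes through for any real $n>0$.
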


Motivated by its applications for lattice polytopes, we ask:

\begin{quest}
  What is the order of $\flt_d(n \cdot \Delta_d)$?
\end{quest}

In order to find lower bounds on the Gromov width the following question is of interest (see Theorem~\ref{lowerbd} and Theorem~\ref{thm:cross}). For this let us define the $d$-dimensional {\em standard crosspolytope} as 
\[
  \Diamond_d \coloneqq \conv(\pm \ve_1, \ldots, \pm \ve_d) \text{.}
\]

\begin{quest}
  \label{quest:flat-consts}
  What is the order of $\fltR(n \cdot \Delta_d)$, respectively of $\fltR(n \cdot \Diamond_d)$?
\end{quest}

\begin{rem}
  Note that the constant in Question \ref{quest:flat-consts} does't play a role (recall $\fltR(n \cdot X) = n \cdot \fltR(X)$).
\end{rem}

Let us mention an interesting relationship between $\flt_d(n\cdot X)$ and $\fltR(X)$ as $n$ goes to infinity.
The following lemma will be crucial in this explanation:

\begin{lem}
  For any $X \subseteq \R^d$, we have $\flt_d(X) \le \fltR(X+[0,1]^d)$.
\end{lem}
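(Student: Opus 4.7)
The plan is to argue by contrapositive: show that any convex body $K \subseteq \R^d$ with $\width(K) > \fltR(X + [0,1]^d)$ already contains a unimodular copy of $X$. By the definition of $\fltR$, such a $K$ contains an $\R$-unimodular copy of $X + [0,1]^d$, so there exist $A \in \GL(d,\Z)$ and $\vb \in \R^d$ with $A(X + [0,1]^d) + \vb \subseteq K$. Unpacking, this means $AX + \vb + A\vect{t} \subseteq K$ for every $\vect{t} \in [0,1]^d$.

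The key observation is that the cube $[0,1]^d$ is a fundamental domain for $\Z^d$, so for \emph{any} $\vect{s} \in \R^d$ one can find $\vect{z} \in \Z^d$ with $\vect{z} - \vect{s} \in [0,1]^d$ (take $\vect{z}$ to be the componentwise ceiling of $\vect{s}$). Applying this with $\vect{s} := -A^{-1}\vb$, I would choose $\vect{t} := \vect{z} - \vect{s} \in [0,1]^d$. Then $A\vect{t} + \vb = A\vect{z} \in \Z^d$ because $A \in \GL(d,\Z)$ preserves $\Z^d$.

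Setting $\vb' := \vb + A\vect{t} \in \Z^d$, the map $T'(\vx) := A\vx + \vb'$ is a (genuine) unimodular transformation, and $T'(X) = AX + \vb' \subseteq K$ by the inclusion above. Thus $K$ contains a unimodular copy of $X$, which is what we needed.

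I do not expect any serious obstacle: the whole content is that an $\R$-unimodular image of $[0,1]^d$ remains a fundamental domain for $\Z^d$, so inside the slab of translates $\vb + A[0,1]^d$ there is always a lattice-integer translation vector.
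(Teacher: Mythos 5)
Your argument is correct in substance and is essentially the paper's proof: both reduce the real translation to an integer one by using that $[0,1]^d$ absorbs the fractional part (the paper writes $\vb=\vb'-\vb''$ with $\vb'\in\Z^d$, $\vb''\in[0,1]^d$; you conjugate by $A$ and pick the right point of the cube). One small sign slip: with $\vect{s}:=-A^{-1}\vb$ and $\vect{t}:=\vect{z}-\vect{s}$ you get $A\vect{t}+\vb=A\vect{z}+2\vb\notin\Z^d$ in general; take $\vect{s}:=A^{-1}\vb$ (or set $\vect{t}:=\vect{z}+\vect{s}$) so that $A\vect{t}+\vb=A\vect{z}\in\Z^d$ as intended.
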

\begin{proof}
  It suffices to show that any convex body $K \subseteq \R^d$ which does not contain a unimodular copy of $X$, cannot contain an $\R$-unimodular copy of $X + [0,1]^d$.
  We show the contraposition, i.e., any convex body $K \subseteq \R^d$ which contains an $\R$-unimodular copy of $X + [0,1]^d$ does also contain a unimodular copy of $X$.
  Suppose $A \cdot ( X + [0,1]^d + \vb) \subseteq K$ for some $A \in \GL_d(\Z)$ and $\vb \in \R^d$.
  We can write $\vb = \vb' - \vb''$ for $\vb' \in \Z^d$ and $\vb'' \in [0,1]^d$.
  Then $A \cdot (X + \vb') \subseteq A \cdot (X + \vb + [0,1]^d) \subseteq K$, i.e., $K$ contains a unimodular copy of $X$.
\end{proof}
\begin{rem}
  In general the inequality in the previous lemma is strict.
  For example, $\flt_1(\{\tfrac{1}{3}\}) = \tfrac{2}{3}$ while $\flt_1^\R(\{\tfrac{1}{3}\} + [0,1]) = 1$.
\end{rem}

Let us have a look how $\flt_d(n \cdot X)$ and $\fltR(X)$ relate as $n$ goes to infinity.
Suppose that $X \subseteq \R^d$ is a full-dimensional convex body, so that there is $\lambda >0$ such that an $\R$-translate of $\lambda^{-1} [0,1]^d$ is contained in $X$.
Then by the previous lemma, we obtain
\[
  n \cdot \fltR(X) = \fltR(n \cdot X) \le \flt_d(n \cdot X) \le \fltR(n \cdot X + [0,1]^d) \le \fltR((n + \lambda) \cdot X) = (n+\lambda) \cdot \fltR(X) \text{.}
\]
We divide this inequality by $n$ and get
\[
  \fltR(X) \le \tfrac{1}{n} \cdot \flt_d(n \cdot X) \le \tfrac{n+\lambda}{n} \cdot \fltR(X)
\]
Hence, in order to determine the order of $\flt_d(n\cdot X)$ as $n\to \infty$, it suffices to determine $\fltR(X)$.

\section{The relation of the lattice width to the Gromov width of symplectic manifolds}
\label{sec:sympl}

\subsection{Background and notation}

The \emph{Gromov width} of a $2d$-dimensional symplectic manifold $(M,\omega)$ is defined as the supremum of the set of capacities $\pi r^2$ of balls of radii $r$ that can be symplectically embedded in $(M,\omega)$ (see \cite{Gromov}). We follow the convention in \cite{McDuff,Lu} and use the identification $S^1=\R/\Z$. There is a large interest in finding lower and upper bounds for the Gromov width, see e.g. \cite{Arezzo, Karshon, Kaveh, Lu, McDuff, PabiniakReduction, PabiniakPolygonal, PabiniakCoadjoint, SchlenkBook}. One should remark that even for symplectic toric manifolds it is not known how to read off the Gromov width from the moment polytope. Here, we observe how closely the Gromov width and the lattice width of the moment polytope are related.

Let $X$ be a complex projective manifold of dimension $d$, $L$ an ample line bundle on $X$ and $\omega$ a K\"ahler form on $X$ representing the Chern class $c_1(L)$. In \cite[Section 4]{Kaveh}, Kaveh constructs $\Z^d$-valued valuations on the field of rational functions $\C(X)$ to obtain associated Newton-Okounkov bodies $\Delta \subseteq \R^d$, i.e., $d$-dimensional convex bodies. We refer to \cite{Kaveh} for the precise definition and choices involved. In the toric case, $\Delta$ is a {\em Delzant polytope}, i.e., a polytope whose normal fan consists of unimodular cones. 

\subsection{Lower bounds on the Gromov width of symplectic manifolds}

The following result (see \cite[Corollary~11.4]{Kaveh}) is closely related to similar results in \cite{McDuff, Lu, PabiniakCoadjoint}.

\begin{thm}[{\cite[Corollary~11.4]{Kaveh}}]
  \label{lowerbd}
  If $\Delta$ contains an $\R$-unimodular copy of $R \cdot \Delta_d$ (for $R > 0$), then the Gromov width of $(X,\omega)$ is at least $R$.
\end{thm}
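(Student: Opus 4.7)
The plan is to reduce to the toric setting via a Newton--Okounkov toric degeneration, and then to use the moment-map description of the standard symplectic ball. I would proceed in three steps.

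First, I would recall the ``simplex model'' of the symplectic ball: on $\C^d$ with its Darboux form, the standard Hamiltonian $T^d$-action by coordinate rotations has moment map $(z_1,\ldots,z_d) \mapsto \pi(|z_1|^2,\ldots,|z_d|^2)$, and the open ball $B^{2d}(r)$ of capacity $\pi r^2 = R$ is mapped onto the interior of $R \cdot \Delta_d$. Consequently, any open symplectic toric manifold whose (open) Delzant image is unimodularly equivalent to $R \cdot \Delta_d$ is equivariantly symplectomorphic to this ball.

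Second, I would treat the case when $(X,\omega)$ is already toric with moment polytope $\Delta$. If $A \cdot (R \cdot \Delta_d) + \vb \subseteq \Delta$ for some $A \in \GL(d,\Z)$ and $\vb \in \R^d$, then the preimage $\mu^{-1}(\mathrm{int}(A \cdot (R \cdot \Delta_d) + \vb))$ is an open $T^d$-invariant subset of $X$. The condition $A \in \GL(d,\Z)$ ensures that composing $\mu$ with the torus automorphism dual to $A$ produces a genuine Hamiltonian $T^d$-action, while translating the moment map by $\vb$ is immaterial for the underlying symplectic structure. By the local uniqueness of toric actions (Delzant's theorem applied to open moment images), this invariant subset is equivariantly symplectomorphic to the simplex model of step one, yielding a symplectic embedding of $B^{2d}(r)$ for every $r < \sqrt{R/\pi}$.

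Third, for general $(X,\omega)$, I would appeal to the Harada--Kaveh toric degeneration associated to the valuation defining $\Delta$: a flat family over a disk whose generic fiber is $(X,\omega)$ and whose central fiber is a (possibly singular) projective toric variety $X_0$ with moment polytope $\Delta$, together with a continuous surjection $\phi \colon X \to X_0$ that restricts to a symplectomorphism over the complement of a measure-zero set which sits over the toric boundary and a singular stratum. Since the prospective embedding lies over the interior of a sub-simplex strictly inside $\Delta$, after shrinking the radius by an arbitrarily small amount I can arrange the image to avoid both the toric boundary and the singular strata of $X_0$; pulling back the embedding constructed in step two along $\phi$ then produces the required symplectic embedding into $X$.

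The main obstacle is the last step: verifying that the gradient-Hamiltonian flow of Harada--Kaveh really defines a symplectomorphism on the relevant open locus and that the singular strata of $X_0$ have codimension at least two so that they can indeed be avoided by a small perturbation. The toric combinatorics in step two is classical once the Delzant picture is invoked; all of the nontrivial analysis is concentrated in transporting the embedding through $\phi$.
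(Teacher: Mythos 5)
First, note that the paper does not prove this statement itself: it is quoted verbatim from Kaveh (Corollary~11.4 of that paper), so the only meaningful comparison is with Kaveh's argument, whose overall architecture (toric local model, then transport through the Harada--Kaveh toric degeneration via the gradient-Hamiltonian flow) your three steps correctly reproduce. The genuine gap is in your second step. The moment image of the ball $B^{2d}(r)$ of capacity $R=\pi r^2$ is not the interior of $R\cdot\Delta_d$ but the half-open simplex $\{x\in\R_{\ge0}^d \colon x_1+\dots+x_d<R\}$: the points of the ball lying on the coordinate hyperplanes map onto the coordinate facets. Consequently, if the $\R$-unimodular copy $A\cdot(R\cdot\Delta_d)+\vb$ sits in the interior of $\Delta$ --- which is the generic situation and is all the hypothesis guarantees --- then $\mu^{-1}\bigl(\mathrm{int}(A\cdot(R\cdot\Delta_d)+\vb)\bigr)$ consists only of free $T^d$-orbits and is equivariantly symplectomorphic to $T^d\times\mathrm{int}(R\cdot\Delta_d)\subseteq T^*T^d$, not to the ball: Delzant/Karshon--Lerman local uniqueness matches moment images together with their orbit-type data, and the open simplex and the half-open simplex are different objects in that category. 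The ball minus the coordinate hyperplanes does embed equivariantly into that preimage, but a ball minus a nonempty closed set carries no lower bound on the Gromov width.

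What is actually needed at this point is the nontrivial, non-equivariant embedding lemma of Traynor (used by Schlenk, McDuff, Lu, Pabiniak, and Kaveh alike): for every $\varepsilon>0$ the \emph{full} open ball of capacity $R$ embeds symplectically into $T^d\times\mathrm{int}\bigl((R+\varepsilon)\cdot\Delta_d\bigr)$. With that lemma your step two goes through (embed the ball of capacity $R-\varepsilon$ into the preimage of the open simplex for every $\varepsilon>0$ and let $\varepsilon\to0$), and your step three --- shrinking slightly so that the image avoids the toric boundary and the non-smooth locus of the central fibre before pulling back along the gradient-Hamiltonian flow --- is indeed how Kaveh concludes. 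So the reduction strategy is the right one, but as written the toric case rests on an identification that fails in general; the ``classical toric combinatorics'' you defer to in step two is precisely where the one genuinely hard symplectic ingredient lives.
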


Note that under the assumption of the theorem, we also have $\width(\Delta) \ge R$. As an immediate application of Corollary~\ref{cor:flatness}, we see that the Gromov width of a symplectic manifold and the lattice width of its moment body are related.

\begin{cor}
  \label{cor:gw}
  The Gromov width of $(X,\omega)$ is bounded from below by $\frac{\width(\Delta)}{\flt(d) d}$.
\end{cor}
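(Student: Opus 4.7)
The plan is to combine Corollary~\ref{cor:flatness} with Theorem~\ref{lowerbd} in the most direct way possible, since the two ingredients have been set up precisely so that they dovetail.

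First, I would set $R := \width(\Delta)/(\flt(d)\, d)$. Since $\Delta \subseteq \R^d$ is a $d$-dimensional convex body (indeed a Delzant polytope), Corollary~\ref{cor:flatness} applies and produces an $\R$-unimodular copy of $R \cdot \Delta_d$ contained in $\Delta$.

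Next, I would feed this containment into Theorem~\ref{lowerbd}: the hypothesis of that theorem is exactly that $\Delta$ contains an $\R$-unimodular copy of $R\cdot\Delta_d$ for some $R>0$, and its conclusion is that the Gromov width of $(X,\omega)$ is at least $R$. Substituting our choice of $R$ gives the desired lower bound $\width(\Delta)/(\flt(d)\,d)$.

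There is essentially no obstacle: the whole content of the corollary is the observation that the two previously established statements compose. The only thing to double-check is that $R>0$ (so that Theorem~\ref{lowerbd} genuinely applies), but this is automatic because $\Delta$ is full-dimensional and hence $\width(\Delta)>0$, while $\flt(d)\,d$ is a finite positive constant.
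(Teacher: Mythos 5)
Your proposal is exactly the argument the paper intends: the corollary is stated as "an immediate application of Corollary~\ref{cor:flatness}" via Theorem~\ref{lowerbd}, which is precisely your composition of the two results with $R = \width(\Delta)/(\flt(d)\,d)$. The approach is correct and identical to the paper's.
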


In particular, for $d=2$, we get that $0.232 \cdot \width(\Delta)$ is a lower bound on the Gromov width. This bound is surely not sharp. We note that the lower bound in Corollary~\ref{cor:gw} is monotone with respect to inclusion of $\Delta$, a property that is conjectured to hold also for the Gromov width. 

As we will need it later, let us describe a more general lower bound construction which is often used for the Gromov width.

\begin{defn}
  Let $\vb_1, \ldots, \vb_d$ be a lattice basis of $\Z^d$, $\vx \in \R^d$, $a \in \Z_{\ge 1}$, and $k_1,l_1, \ldots, k_d, l_d \in \R_{\ge 0}$ with $k_1 + l_1 = a, \ldots, k_d + l_d = a$. Then
  \[
    \vx + \conv( k_1 \vb_1, - l_1 \vb_1, \ldots, k_d \vb_d, - l_d \vb_d)
  \]
  will be called a {\em diamond of size $a$}. Note that $k_1=\cdots=k_d=1$ and $l_1=\cdots=l_d=0$ yields a unimodular simplex, while for $k_1=l_1=1, \ldots, k_d=l_d=1$ we get the standard crosspolytope. See Figure \ref{fig:diamond} for an illustration.
\end{defn}

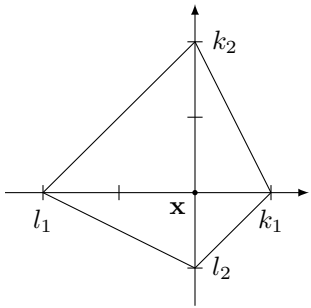
\begin{figure}[!ht]
  \begin{tikzpicture}
    \draw (-2,0) -- (0,2) -- (1,0) -- (0,-1) -- cycle;
    
    \draw[-latex] (-2.5,0) -- (1.5,0);
    \draw[-latex] (0,-1.5) -- (0,2.5);
    
    \fill (0,0) circle (1pt) node[below left] at (0,0) {$\vx$};
    
    \draw (-2,-0.1) node[below] {$l_1$} -- (-2,0.1);
    \draw (1,-0.1) node[below] {$k_1$} -- (1,0.1);
    
    \draw (-0.1,2) -- (0.1,2) node[right] {$k_2$};
    \draw (-0.1,-1) -- (0.1,-1) node[right] {$l_2$};
    
    \draw (-1,-0.1) -- (-1,0.1);
    \draw (-0.1,1) -- (0.1,1);
  \end{tikzpicture}
  \caption{A $2$-dimensional diamond of size $3$ with $\vx = \mathbf{0}$.}
  \label{fig:diamond}
\end{figure}

The following result generalizes Theorem~\ref{lowerbd}. It is strictly speaking only proven in the toric situation, see \cite{SchlenkBook, McDuff, PabiniakPolygonal}, however, the proof of \cite[Corollary~11.4]{Kaveh} should carry through to the general case.

\begin{thm}
  \label{thm:cross}
  If $\Delta$ contains an $\R$-unimodular copy of a diamond of size $R$ (for $R > 0$), then the Gromov width of $(X,\omega)$ is at least $R$.
\end{thm}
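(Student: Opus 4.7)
The plan is to adapt the proof of Theorem~\ref{lowerbd} (which is Kaveh's Corollary~11.4) from standard simplices to diamonds. I would proceed in two stages: first establish the result in the toric situation, then extend to the general case via toric degeneration, exactly as the authors suggest.

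In the toric case, assume $(X,\omega)$ is a symplectic toric manifold with Delzant moment polytope $\Delta$ containing an $\R$-unimodular diamond of size $R$. The task is to construct, for every $\epsilon>0$, a symplectic embedding of the open ball of capacity $R-\epsilon$ into $(X,\omega)$. This is the content of the toric results in \cite{SchlenkBook,McDuff,PabiniakPolygonal}. One concrete approach is to start with the standard symplectic ball in $\C^d$ whose moment image under the standard torus action is the simplex supported at one ``extreme'' vertex of the diamond, and then enlarge this embedding along the remaining half-axes $\pm k_i \vb_i$, $\pm l_i\vb_i$ using symplectic cutting/folding techniques; the crucial point is that the width $k_i+l_i=R$ available in both directions of each axis permits the capacity of the embedded ball to approach $R$, rather than only $\min(k_i,l_i)$ as a naive vertex-based embedding would give.

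For the general (non-toric) case, I would invoke Kaveh's machinery: a $\Z^d$-valued valuation on $\C(X)$ produces a flat toric degeneration of $X$ to a toric variety whose moment polytope approximates the Newton--Okounkov body $\Delta$ from inside by rational polytopes. Because containing an $\R$-unimodular diamond of size $R$ is an open condition in the Hausdorff topology on convex bodies, a sufficiently fine rational approximation $\Delta'$ still contains an $\R$-unimodular diamond of size $R-\epsilon$. Applying the toric case to the central fibre and transferring the ball embedding to $(X,\omega)$ via gradient Hamiltonian flow (as in Kaveh) costs an arbitrarily small additional loss of capacity; letting $\epsilon\to 0$ yields the claimed bound.

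The main obstacle is the toric case itself. For a standard simplex (as in Theorem~\ref{lowerbd}) the embedding can be realized in a single smooth affine chart of $\C^d$ at a toric fixed point, which makes the construction essentially transparent. For a diamond, the ball must be assembled from several local models, one near each extreme vertex of the diamond, and one must verify that the glued embedding is genuinely symplectic and attains capacity arbitrarily close to $R$. Once this step is in place, Kaveh's toric-degeneration transfer goes through verbatim, since its only combinatorial input is the presence of an $\R$-unimodular copy of the target shape inside the Newton--Okounkov body; nothing in that part of the argument is specific to the simplex.
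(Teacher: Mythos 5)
Your outline matches the paper exactly: the paper gives no proof of Theorem~\ref{thm:cross} at all, but merely states that the result is ``strictly speaking only proven in the toric situation'' in \cite{SchlenkBook, McDuff, PabiniakPolygonal} and that the proof of \cite[Corollary~11.4]{Kaveh} should carry through to the general case --- which is precisely the two-stage strategy (toric diamond embedding, then transfer via Kaveh's toric degeneration) you describe. Like the paper, you defer the genuinely hard step (the toric ball embedding of capacity approaching $R$ into the preimage of a diamond) to the cited literature, so your proposal is a faithful, slightly more detailed rendering of the same argument rather than an alternative one.
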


Again, in this case we also have $\width(\Delta) \ge R$. We refer to \cite{Bott}, in particular Example 5.4 therein, for the question how sharp such lower bound constructions can be.

\subsection{A conjectural upper bound on the Gromov width of symplectic manifolds}

The only nontrivial upper bound on the Gromov width of a symplectic toric manifold known to the authors was given by Lu (see \cite{Lu}). For this, let us recall the following two definitions from \cite{Lu}.

\begin{defn}
  \label{def-lu}
  Let $\Delta$ be a Delzant polytope with primitive inner facet normals $\vu_k\in (\R^d)^*$ and facets $\{\vx \in \Delta \colon \pro{\vu_k}{\vx} = -\phi_k\}$ for $\phi_k \in \R$. We define two numbers:
  \begin{itemize}
  \item $\Lambda(\Delta)$ is defined as the maximum over positive finite sums of the form $\sum_{k=1}^m a_k \phi_k$, where $a_k$ are nonnegative integers such that $\sum_{k=1}^m a_k \vu_k = \vo$, and $\sum_{k=1}^m a_k \le d+1$.
  \item $\Upsilon (\Delta)$ is defined as the minimum over positive finite sums of the form $\sum_{k=1}^m a_k \phi_k$, where $a_k$ are nonnegative integers such that $\sum_{k=1}^m a_k \vu_k = \vo$.
  \end{itemize}
\end{defn}

\begin{rem}
  Note that $\Lambda(\Delta)$ is a well-defined finite number (cf. \cite{Lu} or \cite[Prop.~3.2]{Bat91}).
  Furthermore, it is straightforward to show that both $\Lambda(\Delta)$ and $\Upsilon(\Delta)$ are invariant under translations by real vectors.
  After an appropriate translation, we may assume that the origin is in the interior of the polytope $\Delta$, and thus all $\phi_k$ are positive.
  From this it straightforwardly follows that a finite sum $\sum_{k=1}^ma_k\phi_k$ as in Definition \ref{def-lu} is positive if and only if at least one $a_k$ is positive.
  Furthermore, note that in general $\Upsilon(\Delta) \le \Lambda(\Delta)$.
\end{rem}

Lu proves the following two results.

\begin{thm}[{\cite[Theorem 1.1]{Lu}}]
  \label{lu-one}
  The Gromov width of $(X,\omega)$ is bounded from above by $\Lambda(\Delta)$.
\end{thm}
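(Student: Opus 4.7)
The plan is to obstruct large symplectic ball embeddings by a pseudo-holomorphic curve argument, in the spirit of Gromov's monotonicity lemma. The crucial dictionary is as follows: since $X$ is toric, an integer tuple $(a_1,\ldots,a_m)$ with $a_k\ge 0$ and $\sum_k a_k\vu_k=\vo$ represents an effective curve class $[C]\in H_2(X,\Z)$ (the relation expresses that $[C]$ lies in the kernel of the natural map $\Z^m\to\Z^d$ induced by the primitive facet normals, i.e.\ that $[C]$ is actually a curve class). Under this identification the symplectic area of a curve in this class is $\int_C\omega=\sum_k a_k\phi_k$, while the first Chern number is $c_1(X)\cdot[C]=\sum_k a_k$. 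Thus the combinatorial bound $\sum_k a_k\le d+1$ is a ceiling on the Chern number which makes the corresponding Gromov--Witten type count dimensionally meaningful.

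Granting this dictionary, my proof proceeds as follows. Let $\psi\colon B^{2d}(r)\hookrightarrow X$ be a symplectic embedding with $\pi r^2=c$, and fix an $\omega$-tame almost complex structure $J$ on $X$ agreeing with $\psi_\ast J_{\mathrm{std}}$ in a neighbourhood of $\psi(\vo)$ (such $J$ exists by a standard extension argument). Pick an admissible tuple $(a_k)$ realising the maximum $\Lambda(\Delta)$, and use the toric structure plus $J$-holomorphic techniques to exhibit a $J$-holomorphic genus zero curve $C\subseteq X$ in the corresponding homology class passing through $\psi(\vo)$. Gromov's monotonicity lemma, applied to the proper $J$-holomorphic image $\psi^{-1}(C)\cap B^{2d}(r)$ inside the Euclidean ball and containing the origin, then yields
\[
  c \;=\; \pi r^2 \;\le\; \int_{\psi^{-1}(C)\cap B^{2d}(r)}\psi^\ast\omega \;\le\; \int_C\omega \;=\; \sum_k a_k\phi_k \;=\; \Lambda(\Delta),
\]
proving the claimed upper bound on the Gromov width.

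The main obstacle is the first half: constructing the $J$-holomorphic curve through an arbitrary point of $X$ in the prescribed homology class. For curve classes on the extremal rays of the Mori cone this is immediate, since each edge of $\Delta$ gives a torus-invariant $\mathbb{P}^1\subseteq X$ that is honestly holomorphic for the toric complex structure, and these sweep out $X$. For the general class attached to the maximiser of $\Lambda(\Delta)$, however, one must either prove non-vanishing of the associated one-point Gromov--Witten invariant (so that a curve through every point survives for generic tame $J$), or degenerate the class into a chain of torus-invariant $\mathbb{P}^1$'s connecting adjacent facets and then smooth while preserving the incidence at $\psi(\vo)$. The inequality $\sum_k a_k\le d+1$ should enter precisely here, as the combinatorial ceiling ensuring that such a degenerate configuration lives on at most $d+1$ torus-invariant curves, which is exactly the dimension count needed for the invariant to capture a curve through one prescribed point. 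Carrying out this existence step in the necessary generality is the technical heart of Lu's theorem; once it is in hand, the monotonicity step is routine.
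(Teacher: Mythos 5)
First, note that the paper does not prove this statement at all: it is quoted verbatim from Lu's work \cite{Lu}, so there is no in-paper argument to compare against, and your proposal has to be measured against Lu's original proof. Your overall framework --- obstruct ball embeddings by producing, for a suitable tame $J$, a $J$-holomorphic sphere through the centre of the embedded ball and applying Gromov's monotonicity lemma, using the dictionary $\beta=(a_1,\dots,a_m)\in\ker(\Z^m\to\Z^d)$, $\omega(\beta)=\sum_k a_k\phi_k$, $c_1(X)\cdot\beta=\sum_k a_k$ --- is the right one and matches Lu's strategy.

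However, there are two genuine gaps. The first you acknowledge yourself: the existence of the holomorphic curve through an arbitrary point for an arbitrary tame $J$ is the entire content of the theorem, and you defer it. The second is a logical inversion that would derail your plan even granting strong curve-counting tools: you propose to fix a tuple $(a_k)$ realising the \emph{maximum} $\Lambda(\Delta)$ and to produce a curve in \emph{that} class. Nothing in the definition of $\Lambda(\Delta)$ guarantees that the maximising tuple is an effective class, let alone one with a nonvanishing one-point Gromov--Witten invariant; it is merely a nonnegative integer vector in the kernel of $\Z^m\to\Z^d$. The correct logic runs the other way: one shows (via uniruledness of toric manifolds and Gromov compactness, with positivity of intersection forcing $a_k=\beta\cdot D_k\ge 0$ for a curve component not contained in the toric divisor $D_k$, and the dimension formula forcing $c_1(X)\cdot\beta\le d+1$ for a component through a generic point) that for every point and every tame $J$ there is a $J$-sphere through that point in \emph{some} admissible class, which one cannot choose; its area is then bounded above by the maximum $\Lambda(\Delta)$. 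This is precisely why the general bound is a maximum while the Fano bound $\Upsilon(\Delta)$ of Theorem~\ref{lu-two} is a minimum, where one has enough control to select the cheapest class. If you could really target a class of your choosing, you would target the minimising one and prove the bound $\Upsilon(\Delta)$ in general, which is exactly the content of the open Conjecture~\ref{upper}. So your proposal is a reasonable road map, but its existence step is both missing and aimed at the wrong class.
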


\begin{thm}[{\cite[Theorem 1.2]{Lu}, see also \cite[Theorem 5.5]{Bott}}]
  \label{lu-two}
  If $X$ is a toric Fano manifold, then the Gromov width of $(X,\omega)$ is bounded from above by $\Upsilon(\Delta)$.
\end{thm}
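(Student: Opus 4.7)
The plan is to obstruct symplectic embeddings of large balls via non-vanishing Gromov-Witten invariants, following Lu's strategy. The starting point is the classical principle (due to Gromov and made precise by McDuff) that if a standard symplectic ball $B^{2d}(c)$ of capacity $c$ embeds symplectically into $(X,\omega)$, then for every $\omega$-tame almost complex structure $J$ on $X$ and every point $p$ in the image of the ball, there is a non-constant $J$-holomorphic sphere through $p$ of $\omega$-area at most $c$. Consequently, if the one-point Gromov-Witten invariant $\langle [pt]\rangle_\beta$ is non-zero for some class $\beta \in H_2(X,\Z)$, then every embedded ball satisfies $c \le \omega(\beta)$; minimising over such $\beta$ yields an upper bound on the Gromov width of $(X,\omega)$.

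To translate this bound into the combinatorial data of the Delzant polytope $\Delta$, I would invoke the standard toric dictionary. The effective classes in $H_2(X,\Z)$ are parametrised precisely by non-negative integer tuples $(a_1, \ldots, a_m)$ satisfying $\sum_k a_k \vu_k = \vo$, and the symplectic area of the corresponding class equals $\sum_k a_k \phi_k$. The quantity $\Upsilon(\Delta)$ from Definition~\ref{def-lu} is therefore exactly the minimum of $\omega(\beta)$ over non-zero effective classes $\beta$. The task thus reduces to exhibiting a minimising class $\beta_0$ for which $\langle [pt]\rangle_{\beta_0} \neq 0$.

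The main obstacle is this non-vanishing, and it is precisely where the toric Fano hypothesis is essential: it forces $c_1(X)\cdot\beta > 0$ on every non-zero effective class, and thereby rules out stable-map degenerations into classes of strictly smaller $\omega$-area, so the relevant Gromov-Witten moduli space has the expected virtual dimension. I would first attempt to produce the invariant via the Seidel representation associated to an edge of $\Delta$: the corresponding Seidel element is invertible in the small quantum cohomology of $X$, and expanding its inverse under Fano positivity forces a non-zero GW invariant supported on a minimising effective class. As an alternative, one can use Cox's homogeneous coordinate description of $X$ to write down an explicit family of rational curves representing each minimal effective class and verify by a dimension count that this family sweeps out all of $X$, giving $\langle [pt]\rangle_{\beta_0} \neq 0$ directly. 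Either route, combined with the curve-through-a-point estimate from the first paragraph, yields Gromov width at most $\Upsilon(\Delta)$.
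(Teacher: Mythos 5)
The paper does not prove this statement at all: it is quoted verbatim from Lu (\cite[Theorem 1.2]{Lu}, see also \cite[Theorem 5.5]{Bott}) and used as a black box, so there is no in-paper argument to compare yours against. Judged on its own terms, your sketch correctly identifies the overall strategy of Lu's proof --- a non-vanishing Gromov--Witten invariant with a point constraint in a class $\beta$ forces any embedded ball to have capacity at most $\omega(\beta)$, and the toric dictionary identifies $\Upsilon(\Delta)$ with the minimal $\omega$-area $\sum_k a_k\phi_k$ of a curve class pairing non-negatively with all the invariant divisors. (One imprecision here: the tuples $(a_1,\dots,a_m)$ with all $a_k\ge 0$ and $\sum_k a_k\vu_k=\vo$ are the classes pairing non-negatively with every $D_k$, i.e.\ the dual of the effective cone of divisors; they are not ``precisely the effective classes'' in $H_2(X,\Z)$, although this does not affect the direction of the inequality you need.)

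The genuine gap is the last paragraph. The entire content of Lu's theorem is the non-vanishing of a suitable Gromov--Witten invariant supported on a class of area $\Upsilon(\Delta)$, and you assert this rather than prove it, offering two possible routes (Seidel elements, explicit curve families via Cox coordinates) without carrying either out. Neither route is routine: a minimising class $\beta_0$ for $\Upsilon(\Delta)$ need not satisfy the dimension constraint $c_1(X)\cdot\beta_0 = 2$ required for the one-point invariant $\langle [pt]\rangle_{\beta_0}$ to be non-trivial for degree reasons, so in general one must work with multi-point invariants $\langle [pt],\alpha_1,\dots,\alpha_k\rangle_{\beta_0}$ (this is why Lu develops his pseudo symplectic capacities $W_G$ rather than arguing with a single insertion), and one must still verify that the class realising the minimum in $\Upsilon(\Delta)$ --- not merely some effective class --- carries a non-zero invariant. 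Your remark that Fano positivity ``rules out degenerations into classes of smaller area'' gestures at the right mechanism but is not a proof; as it stands the argument establishes only the conditional statement ``if some minimising class has a non-vanishing point-constrained GW invariant, then the Gromov width is at most $\Upsilon(\Delta)$.''
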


We observe that Lu's sharper upper bound $\Upsilon(\Delta)$ is simply the lattice width.

\begin{prop}
  \label{equality-widths}
  If $\Delta$ is a Delzant polytope, then $\Upsilon(\Delta)=\width(\Delta)$.
\end{prop}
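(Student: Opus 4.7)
The plan is to establish both inequalities $\Upsilon(\Delta) \le \width(\Delta)$ and $\width(\Delta) \le \Upsilon(\Delta)$ separately, each time by converting between nonnegative integer relations $\sum_k a_k \vu_k = \vo$ as in Definition~\ref{def-lu} and linear-programming data at the min/max vertices of an integral functional $\vu$ on $\Delta$. Only the first inequality uses the Delzant hypothesis; the other relies solely on the fact that the $\vu_k$ are inner facet normals, together with the identity $h_\Delta(-\vu_k) = \phi_k$ for the support function $h_\Delta$ of $\Delta$.

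For $\Upsilon(\Delta) \le \width(\Delta)$, I would fix $\vu \in (\Z^d)^* \setminus \{\vo\}$ and let $v^-, v^+$ be vertices of $\Delta$ attaining $\min_{x\in\Delta}\vu(x)$ and $\max_{x\in\Delta}\vu(x)$. The Delzant condition forces precisely $d$ facets to meet at $v^-$, and their primitive inner normals, which I relabel $\vu_{i_1},\dots,\vu_{i_d}$, form a $\Z$-basis of $(\Z^d)^*$. Hence $\vu = \sum_{j=1}^d a_j \vu_{i_j}$ uniquely with $a_j \in \Z$. The primitive edge directions $\vw_1,\dots,\vw_d$ of $\Delta$ at $v^-$ are the basis dual to $\vu_{i_1},\dots,\vu_{i_d}$; because $\vu$ is minimized at $v^-$ it is nondecreasing along each edge, so $a_j = \vu(\vw_j) \ge 0$. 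Combined with $\vu_{i_j}(v^-) = -\phi_{i_j}$ this gives $\vu(v^-) = -\sum_j a_j \phi_{i_j}$. The same argument applied to $-\vu$ at $v^+$ produces $b_k \in \Z_{\ge 0}$ supported on the facet indices at $v^+$ with $\sum_k b_k \vu_k = -\vu$ and $\vu(v^+) = \sum_k b_k \phi_k$. Setting $c_k := a_k + b_k$ yields $c_k \in \Z_{\ge 0}$, $\sum_k c_k \vu_k = \vo$, and $\sum_k c_k \phi_k = \width_\vu(\Delta) > 0$, which is admissible in the definition of $\Upsilon(\Delta)$; thus $\Upsilon(\Delta) \le \width_\vu(\Delta)$, and minimizing over $\vu$ concludes this direction.

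For the converse $\width(\Delta) \le \Upsilon(\Delta)$, I would start from any admissible relation $\sum_k c_k \vu_k = \vo$ with $c_k \in \Z_{\ge 0}$ and $s := \sum_k c_k \phi_k > 0$, and put $S := \{k : c_k > 0\}$. Picking any $k_0 \in S$ and setting $T := \{k_0\}$ gives $\vu_T := \sum_{k \in T} c_k \vu_k = c_{k_0}\vu_{k_0} \ne \vo$. The relation yields $-\vu_T = \sum_{k \in S \setminus T} c_k \vu_k$, and combining $h_\Delta(-\vu_k) = \phi_k$ with subadditivity and positive homogeneity of the support function produces
\[
  h_\Delta(\vu_T) \le \sum_{k \in S \setminus T} c_k \phi_k, \qquad h_\Delta(-\vu_T) \le \sum_{k \in T} c_k \phi_k.
\]
Adding gives $\width_{\vu_T}(\Delta) = h_\Delta(\vu_T) + h_\Delta(-\vu_T) \le s$, hence $\width(\Delta) \le s$; minimizing over admissible relations produces $\width(\Delta) \le \Upsilon(\Delta)$. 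The main obstacle, confined to the first direction, is integrality: the LP-dual certificates for $\vu(v^\pm)$ need not a priori be integer, but the Delzant hypothesis supplies them as integers for free via the local $\Z$-basis of facet normals at each Delzant vertex.
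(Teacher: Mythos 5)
Your proof is correct, but it is organized differently from the paper's. The paper factors the statement through the \emph{facet width}: Lemma~\ref{lu-upper} shows $\Upsilon(\Delta)$ equals the facet width, and Lemma~\ref{lw=fw} separately shows that for Delzant polytopes the facet width equals the lattice width (the nontrivial half of the latter replaces a width-minimizing functional $\vu=\sum_i k_i\ve_i$ by a single facet normal $\ve_1$ at a vertex cone). Your second direction, $\width(\Delta)\le\Upsilon(\Delta)$, is essentially the paper's first half of Lemma~\ref{lu-upper} in support-function language: both split an admissible relation $\sum_k c_k\vu_k=\vo$ into two halves and add the resulting facet inequalities; the paper peels off a single copy of $\vu_{k_0}$ and so bounds the width of the primitive facet normal itself, whereas you bound the width of $c_{k_0}\vu_{k_0}$, which suffices for the lattice width but not for the facet-width identity. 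Your first direction is genuinely different: instead of passing through the facet width, you expand an arbitrary nonzero $\vu\in(\Z^d)^*$ in the $\Z$-basis of inner normals at the minimizing vertex and $-\vu$ at the maximizing vertex, extracting nonnegativity of the integer coefficients from first-order optimality along the edge directions (the dual basis), and summing the two certificates to get an admissible relation of value exactly $\width_\vu(\Delta)$. This two-vertex decomposition uses the Delzant hypothesis only locally at the two extreme vertices and avoids both the second half of Lemma~\ref{lu-upper} (which invokes completeness of the normal fan to place $-\vu_m$ in some maximal cone) and all of Lemma~\ref{lw=fw}; the trade-off is that the paper's route additionally establishes the identity $\Upsilon(\Delta)=$ facet width $=$ lattice width, and Lemma~\ref{lw=fw} is reused later (e.g., in Lemma~\ref{minimal} and Proposition~\ref{gw1}), so your streamlining would not fully replace those lemmas elsewhere in the paper.
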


The proof follows by combining the next two lemmata. For this, let us define the {\em facet width} of a polytope $\Delta \subseteq \R^d$ as the minimum of $\width_\vu(\Delta)$ where $\vu \in (\Z^d)^*$ ranges over all facet normals of $\Delta$. 

\begin{lem}
  Let $\Delta \subseteq \R^d$ be a Delzant polytope. Then $\Upsilon(\Delta)$ equals the facet width of $\Delta$.\label{lu-upper}
\end{lem}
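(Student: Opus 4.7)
The plan is to establish the two opposing inequalities $\Upsilon(\Delta) \le W$ and $\Upsilon(\Delta) \ge W$ separately, where $W \coloneqq \min_i \width_{\vu_i}(\Delta)$ denotes the facet width of $\Delta$.

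For the upper bound $\Upsilon(\Delta) \le W$, I would construct, for each facet index $i$, an admissible relation whose associated sum equals $\width_{\vu_i}(\Delta)$. Let $v$ be a vertex of $\Delta$ at which $\vu_i$ attains its maximum. Then $\vu_i$ lies in the outer normal cone at $v$, so it is a non-negative real linear combination of the outer facet normals $-\vu_j$ over the facets $j$ meeting at $v$. The Delzant hypothesis upgrades this to integrality: the primitive inner normals at $v$ form a $\Z$-basis of $(\Z^d)^*$, forcing the coefficients to lie in $\Z_{\ge 0}$. The resulting identity $\vu_i + \sum_j c_j \vu_j = \vo$ is a valid relation in the sense of Definition~\ref{def-lu}, and using $\vu_j(v) = -\phi_j$ at $v$ the corresponding sum becomes $\phi_i + \sum_j c_j \phi_j = \phi_i + \vu_i(v) = \width_{\vu_i}(\Delta)$. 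Taking the minimum over $i$ gives $\Upsilon(\Delta) \le W$.

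For the lower bound $\Upsilon(\Delta) \ge W$, I would take an arbitrary admissible relation $\sum_k a_k \vu_k = \vo$ with non-negative integer $a_k$ not all zero, pick any index $i$ with $a_i \ge 1$, and evaluate at the vertex $v$ maximizing $\vu_i$. Since $\sum_k a_k \vu_k(v) = 0$, one rewrites
\[
  \sum_k a_k \phi_k = \sum_k a_k \bigl(\phi_k + \vu_k(v)\bigr).
\]
Every summand on the right is non-negative because $\phi_k + \vu_k(v) \ge 0$ holds throughout $\Delta$, and the single term with index $i$ already contributes $a_i \cdot (\phi_i + \max_\Delta \vu_i) = a_i \cdot \width_{\vu_i}(\Delta) \ge \width_{\vu_i}(\Delta) \ge W$. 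Minimizing over the relation then yields $\Upsilon(\Delta) \ge W$.

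The only real obstacle is the upper-bound direction, where the Delzant hypothesis is essential to promote a mere non-negativity statement about the coefficients in the normal-cone expansion to genuine integrality; without Delzant one can still express $-\vu_i$ as a non-negative real combination of the normals at $v$, but the relation thus produced would not be admissible for $\Upsilon(\Delta)$. The lower bound, by contrast, is essentially formal: it uses only the integrality of the $a_k$ (to know some $a_i \ge 1$) together with the defining inequalities of $\Delta$.
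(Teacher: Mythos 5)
Your proof is correct and follows essentially the same route as the paper's: the lower bound is the same formal evaluation of an admissible relation against the defining inequalities of $\Delta$ (the paper peels off one copy of $\vu_i$ and bounds it by the rest, you keep all terms nonnegative and isolate the $i$-th one, which amounts to the same computation), and the upper bound uses exactly the paper's construction of expressing $\vu_i$ integrally in the unimodular normal cone at the maximizing vertex. The only cosmetic difference is that the paper normalizes $\phi_m=0$ by a translation and treats only the width-attaining facet normal, whereas you compute directly for every $i$ and minimize at the end.
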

\begin{proof}
  Let $a_k, \vu_k, \phi_k$ be given as in the definition of $\Upsilon(\Delta)$ (see Definition~\ref{def-lu}).
  
  We first show that $\Upsilon(\Delta)$ is bounded from below by the facet width of $\Delta$. For this, it suffices to show that for any positive finite sum $\sum_{k=1}^m a_k\phi_k$ as in Definition~\ref{def-lu} there is an inner facet normal $\vu$ of $\Delta$ such that $\width_\vu(\Delta) \le \sum_{k=1}^m a_k \phi_k$. Suppose $a_1 \not= 0$ and define $b_1 \coloneqq a_1 - 1$, $b_k \coloneqq a_k$ for $k > 1$. Hence, $\vu_1 + \sum_{k=1}^m b_k \vu_k = \vo$. For $\vx \in \Delta$, we have $\pro{\vu_1}{\vx} \ge - \phi_1$ and $\pro{-\vu_1}{\vx} = \pro{\sum_{k=1}^m b_k \vu_k}{\vx} \ge - \sum_{k=1}^m b_k \phi_k$, so that $\pro{\vu_1}{\vx} \le \sum_{k=1}^m b_k \phi_k$. Thus, $\width_{\vu_1}(\Delta) \le \phi_1 + \sum_{k=1}^m b_k \phi_k = \sum_{k=1}^m a_k \phi_k$.

  For the reverse inequality, suppose $\vu_m \in (\Z^d)^*$ is a primitive inner facet normal of $\Delta$ for which the facet width is attained, i.e., $\width_{\vu_m}(\Delta)=$ facet width of $\Delta$. It suffices to show that there is a positive finite sum $\sum_{k=1}^m a_k \phi_k$ as in Definition~\ref{def-lu} which bounds $\width_{\vu_m}(\Delta)$ from below. Note that both the facet width and $\Upsilon(\Delta)$ are invariant under translations by real vectors. Hence, we may assume that $\vo$ is an element of the facet corresponding to $\vu_m$, ant thus, $\phi_m = 0$. There exist primitive inner facet normals, say $\vu_1, \ldots, \vu_d$ (up to reordering the rays), that span a unimodular cone $\sigma$ of the inner normal fan of $\Delta$ such that $-\vu_m = \sum_{k=1}^d a_k \vu_k$ with $a_k \in \Z_{\ge 0}$. Let the remaining $a_k$ vanish, i.e., $a_{d+1} = \ldots = a_m = 0$, and let $\vx$ be the vertex of $\Delta$ corresponding to $\sigma$. In particular, $\pro{\vu_k}{\vx} = -\phi_k$ for $k=1, \ldots, d$. Then $\sum_{k=1}^m a_k \phi_k = - \sum_{k=1}^d a_k \pro{\vu_k}{\vx} = \pro{\vu_m}{\vx} \le \width_{\vu_m}(\Delta)$.
\end{proof}

It remains to observe the following. 

\begin{lem}
  \label{lw=fw}
  The lattice width of a Delzant polytope coincides with its facet width.
\end{lem}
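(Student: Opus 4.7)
The plan is to establish the two inequalities separately. The bound $\width(\Delta) \le$ facet width is immediate, since the facet normals of $\Delta$ form a subset of $(\Z^d)^* \setminus \{\vo\}$, so minimizing $\width_\vu(\Delta)$ over the larger set can only decrease the value.

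For the reverse inequality, I would take a non-zero $\vu \in (\Z^d)^*$ attaining the lattice width of $\Delta$ and produce a facet normal $\vu_1$ with $\width_{\vu_1}(\Delta) \le \width_\vu(\Delta)$. Let $\vx^*$ be a vertex of $\Delta$ at which $\vu$ attains its minimum over $\Delta$; then $\vu$ lies in the inner normal cone $N(\vx^*)$. The Delzant hypothesis enters here: $N(\vx^*)$ is a unimodular cone generated by the primitive inner facet normals $\vu_1, \ldots, \vu_d$ at $\vx^*$, and these form a $\Z$-basis of $(\Z^d)^*$. Expanding $\vu = \sum_{i=1}^d c_i \vu_i$, the basis property forces $c_i \in \Z$, while $\vu \in N(\vx^*)$ forces $c_i \ge 0$, so $c_i \in \Z_{\ge 0}$; since $\vu \neq \vo$, at least one coefficient, say $c_1$, satisfies $c_1 \ge 1$. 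Because each $\vu_i$ is likewise minimized on $\Delta$ at $\vx^*$, picking $\vy \in \Delta$ that maximizes $\vu_1$ then yields
\[
  \width_\vu(\Delta) \ge \vu(\vy) - \vu(\vx^*) = \sum_{i=1}^d c_i\bigl(\vu_i(\vy) - \vu_i(\vx^*)\bigr) \ge c_1\,\width_{\vu_1}(\Delta) \ge \width_{\vu_1}(\Delta),
\]
where the middle inequality uses that every summand is non-negative and that the $i=1$ summand equals $c_1\,\width_{\vu_1}(\Delta)$. Since $\vu_1$ is a facet normal of $\Delta$, this gives the desired inequality.

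I do not anticipate any substantial obstacle: once the decomposition of $\vu$ along the basis $\vu_1, \ldots, \vu_d$ is in place, the rest is a one-line estimate. The Delzant assumption is used exactly to upgrade the nonnegative rational coefficients coming from $\vu \in N(\vx^*)$ to nonnegative integer coefficients, so that $c_1 \ge 1$ rather than merely $c_1 > 0$; without this integrality, a primitive lattice functional in the interior of some normal cone could a priori have strictly smaller width than any facet normal.
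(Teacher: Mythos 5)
Your proof is correct and follows essentially the same route as the paper: both arguments place $\vu$ in the (unimodular) inner normal cone of a vertex minimizing $\vu$, use the Delzant hypothesis to write $\vu$ as a nonnegative \emph{integer} combination of the facet normals generating that cone, and then compare widths via the nonnegativity of the remaining terms. The paper merely normalizes first (unimodular change of basis and translation of the vertex to the origin) before running the same estimate.
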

\begin{proof}
  Let $\Delta \subseteq \R^d$ be a Delzant polytope. Clearly, the lattice width of $\Delta$ is less than or equal to the facet width of $\Delta$.

  For the reverse inequality, let $\vu \in (\Z^d)^*$ with $\width_\vu(\Delta) = \width(\Delta)$. As $\Delta$ is a Delzant polytope, we can replace $\Delta$ by a unimodular copy such that $\vu = \sum_{i=1}^d k_i \ve_i$ for $k_i \in \Z_{\ge 0}$, where the standard basis $\ve_1, \ldots, \ve_d$ of $\Z^d$ spans a cone in the inner normal fan of $\Delta$. Suppose $k_1 \ge 1$ and let $\vv$ be the vertex of $\Delta$ corresponding to the cone spanned by $\ve_1, \ldots, \ve_d$. By translating by a real vector we may assume that $\vv=\vo$. Let $\vw_1$ be a vertex of $\Delta$ maximizing $\ve_1$ on $\Delta$, so that $\width_{\ve_1}(\Delta)=\pro{\ve_1}{\vw_1}$. We have 
  \[
    \pro{\vu}{\vw_1} \le \width_\vu(\Delta) = \width(\Delta) \le \width_{\ve_1}(\Delta) = \pro{\ve_1}{\vw_1}.
  \]
  Hence, $0 \ge \probracket{\vu-\ve_1}{\vw_1} = \probracket{(k_1-1) \ve_1 + \sum_{k=2}^d k_i \ve_i}{\vw_1} \ge 0$, i.e., $\vu(\vw_1) = \ve_1(\vw_1)$. The statement follows.
\end{proof}

This motivates the question whether Lu's sharper upper bound also holds in general.

\begin{conj}
  \label{upper}
  The Gromov width of $(X,\omega)$ is bounded from above by the lattice width of $\Delta$.
\end{conj}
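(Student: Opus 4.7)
By Proposition~\ref{equality-widths}, the conjectured upper bound $\width(\Delta)$ coincides with Lu's quantity $\Upsilon(\Delta)$, so the content of the conjecture is to remove the Fano hypothesis from Theorem~\ref{lu-two} and, in the Newton--Okounkov setting, to reduce the general projective case to the toric one. The plan is therefore to tackle the toric case first and then handle the general case via a degeneration argument.

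In the toric case, Lemma~\ref{lw=fw} supplies a primitive facet normal $\vu$ of the moment polytope $\Delta$ with $\width_\vu(\Delta) = \width(\Delta)$, and the corresponding circle subgroup $S^1_\vu \subseteq T^d$ acts Hamiltonianly on $X$ with moment map image an interval of length $\width(\Delta)$. I would establish the conjectured bound by exhibiting, for a generic $\omega$-compatible almost-complex structure, a non-trivial pseudo-holomorphic sphere class of $\omega$-area at most $\width(\Delta)$ whose representatives pass through every point of $X$; the natural candidates are $T^d$-invariant sphere classes associated to the facet of $\Delta$ with normal $\vu$, whose $\omega$-areas are controlled by $\width_\vu(\Delta)$. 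Gromov's compactness theorem combined with positivity of intersection would then force any symplectically embedded ball of capacity $c$ to satisfy $c \le \width_\vu(\Delta) = \width(\Delta)$, in the spirit of \cite[Cor.~11.4]{Kaveh} and \cite{Lu}.

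The principal obstacle is Gromov--Witten non-vanishing outside the Fano range, which is exactly where Lu's argument in \cite{Lu} uses the positivity of the anticanonical class. For general Delzant polytopes the facet-dual sphere class need not have positive first Chern number and the naive GW invariant may vanish. To circumvent this I would replace the bare count with a Seidel element associated to the $S^1_\vu$-action, whose $\omega$-valuation equals the length of the moment map image; non-vanishing of this element in the small quantum cohomology of a symplectic toric manifold would then yield the required pseudo-holomorphic representative. The passage from the toric case to the general Newton--Okounkov setting should proceed via a Harada--Kaveh toric degeneration: under mild hypotheses on the valuation defining $\Delta$, such a degeneration supplies symplectic toric charts inside $(X,\omega)$ with moment polytopes exhausting $\Delta$, through which upper bounds on the Gromov width transfer, with arbitrarily small loss, from the toric model to $(X,\omega)$.
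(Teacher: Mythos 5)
This statement is presented in the paper as Conjecture~\ref{upper}, not as a theorem: the paper does not prove it, explicitly records it as open (see also \cite[Question 5.10]{Bott}), and offers only the heuristic of Remark~\ref{heur} plus the special cases already covered by Theorem~\ref{lu-two} combined with Proposition~\ref{equality-widths} (the toric Fano case) and Lemma~\ref{minimal}. So there is no proof in the paper to compare yours against; a complete argument would be a genuinely new result, and your text should be judged on whether it actually closes the problem. It does not: it is a research programme in which each of the two main steps contains a genuine gap.

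First, in the toric non-Fano case the entire difficulty is exactly the non-vanishing statement you defer to the Seidel element. The Seidel element of the circle subgroup $S^1_\vu$ is invertible in quantum cohomology and hence non-zero, but for toric manifolds outside the NEF range it is known to acquire lower-order correction terms beyond the expected leading term (McDuff--Tolman). Consequently the sphere classes that genuinely admit pseudo-holomorphic representatives through a prescribed generic point need not include the class of $\omega$-area $\width_\vu(\Delta)$; the classes that do contribute may have strictly larger area, in which case Gromov's monotonicity argument only yields a weaker upper bound. Writing that non-vanishing ``would then yield the required pseudo-holomorphic representative'' relocates the open problem rather than resolving it. Second, your reduction from the general Newton--Okounkov setting to the toric case runs in the wrong direction. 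Toric degenerations in the style of Harada--Kaveh produce symplectic embeddings of large subsets of the toric model \emph{into} $(X,\omega)$, which is precisely why they yield lower bounds on the Gromov width (this is the mechanism behind Theorem~\ref{lowerbd}). An embedded ball in $X$ realizing a large capacity has no reason to lie inside the open dense locus on which the degeneration map is a symplectomorphism, so upper bounds do not transfer from the toric fibre back to $(X,\omega)$ by this route. Until both points are addressed, the statement remains a conjecture.
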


This is also formulated as a question in \cite[Question 5.10]{Bott}. 

\begin{rem}
  \label{heur}
  Here is a heuristical argument in favor of this conjecture. Put a unimodular copy $\Delta'$ of $\Delta$ between two parallel coordinate hyperplanes whose distance equals the lattice width. Then clearly $\Delta'$ is included in a large rectangular box of the same lattice width. It is expected (but far open) that the Gromov width should respect inclusion of the moment polytopes. Therefore, the Gromov width should be at most as large as that of the symplectic toric manifold corresponding to the rectangular box, i.e., a product of projective lines. However, in this case the Gromov width is known (e.g., by Gromov's proof of the non-squeezing theorem) and equals the smallest size of an edge, which is the lattice width of $\Delta$.
\end{rem}

The reader should be aware that the Gromov width may differ from the lattice width as is shown in \cite[Example 5.6]{Bott}. We remark that it seems not to be clear whether fixing the lattice width of $\Delta$ imposes any bounds on the Gromov width.

\begin{rem}
  Lu claims in \cite[Remark 1.5]{Lu} that his upper bound $\Upsilon(\Delta)$ (which by above results equals the lattice width of $\Delta$) does not hold in the non-Fano case by exhibiting an explicit example of a polygon space. This would contradict Conjecture~\ref{upper} and the argument in Remark~\ref{heur} would show that the Gromov width were not monotone with respect to inclusions. However, we couldn't verify his claim, his computations seem to be wrong. In fact, we get $\Upsilon(\Delta) = 2$ while Lu claims $\Upsilon(\Delta)=\tfrac{1}{6}$. In particular, $\Upsilon(\Delta) = 2$ coincides with the Gromov width by \cite[Theorem 1]{PabiniakPolygonal}.
\end{rem}

\begin{rem}
  The reader may wonder why we expressed Conjecture~\ref{upper} in terms of the lattice width instead of the facet width. This was to make the relation to the other results in this paper more apparent and to stress the fact that while the lattice width of convex bodies is monotone with respect to inclusion (as is also conjectured for the Gromov width), this is not true for the facet width of (non-Delzant) polytopes. For instance, for a natural number $k \ge 1$, the lattice triangle with vertices $(0,0)$, $(k,-1)$, and $(k+1,1)$ is contained in the lattice rectangle $[0,k+1] \times [-1,1]$. While the rectangle has facet width $2$, the facet width of the triangle is linearly increasing in $k$ (indeed it equals $2k+1$).
\end{rem}
 
\subsection{\texorpdfstring{Dimension $2$}{Dimension 2}}

As in dimension $2$ any smooth complete toric surface is obtained from $\mathbb{P}^2$, $\mathbb{P}^1 \times \mathbb{P}^1$ or a Hirzebruch surface $\mathcal{H}_a$ (for $a \in \Z_{\ge 1}$) by a sequence of blows-ups at toric fixed points (e.g., see \cite[page 43]{Fulton}), a proof of the $2$-dimensional case of Conjecture \ref{upper} could be achieved by an appropriate generalization of \cite[Theorem 6.2]{Lu}, where Lu studies how the upper bound $\Upsilon(\Delta)$ behaves under blow-ups of toric Fano manifolds at toric fixed points.

Let us note the folklore fact that for these minimal toric surfaces the Gromov width indeed equals the lattice width.

\begin{lem}
  \label{minimal}
  If $X$ is $\mathbb{P}^2$, $\mathbb{P}^1 \times \mathbb{P}^1$ or $\mathcal{H}_a$, then the Gromov width of $(X,\omega)$ equals the lattice width of $\Delta$.
\end{lem}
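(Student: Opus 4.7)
The plan is to treat the three types of minimal toric surfaces in turn, making the moment polytope explicit, reading off its lattice width, and bounding the Gromov width above and below.

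After an $\R$-unimodular change of coordinates, the polytope $\Delta$ is either (i) a scaled standard simplex $\lambda \Delta_2$ for $X = \mathbb{P}^2$, or (ii) a rectangle $[0,\alpha] \times [0,\beta]$ for $X = \mathbb{P}^1 \times \mathbb{P}^1$, or (iii) a trapezoid with vertices $(0,0)$, $(\alpha,0)$, $(\alpha + a\beta, \beta)$, $(0,\beta)$ for $X = \mathcal{H}_a$ with $a \ge 1$. In case (i) the lattice width is obviously $\lambda$ and in case (ii) it is $\min(\alpha, \beta)$. In case (iii) the facet widths are $\beta$ (in direction $\pm \ve_2$) and $\alpha + a\beta$ (in the remaining two primitive facet normal directions), so Lemma~\ref{lw=fw} yields $\width(\Delta) = \beta$.

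For the lower bound, I would exhibit an $\R$-unimodular copy of $\width(\Delta) \cdot \Delta_2$ inside $\Delta$ and apply Theorem~\ref{lowerbd}. In case (i) the polytope already has this form; in case (ii) the simplex $\conv(\vo, m\ve_1, m\ve_2)$ with $m = \min(\alpha, \beta)$ sits inside the rectangle; in case (iii) the triangle $\conv(\vo, \beta \ve_2, \beta \ve_1 + \beta \ve_2)$ is contained in the trapezoid (since $\beta \le \alpha + a\beta$ whenever $a \ge 1$) and is the image of $\beta \Delta_2$ under the unimodular linear map sending $\ve_1 \mapsto \ve_2$ and $\ve_2 \mapsto \ve_1 + \ve_2$.

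For the upper bound, observe first that $\mathbb{P}^2$, $\mathbb{P}^1 \times \mathbb{P}^1 = \mathcal{H}_0$, and $\mathcal{H}_1$ are toric Fano manifolds, so by Theorem~\ref{lu-two} together with Proposition~\ref{equality-widths} the Gromov width is at most $\Upsilon(\Delta) = \width(\Delta)$. For the non-Fano Hirzebruch surfaces $\mathcal{H}_a$ with $a \ge 2$, Lu's sharper bound does not directly apply; here one appeals to the explicit computation of the Gromov width of Hirzebruch surfaces available in the symplectic topology literature (e.g.\ via Karshon's analysis of Hamiltonian $S^1$-spaces on four-dimensional symplectic manifolds, or via direct embedding and non-squeezing arguments). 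The main obstacle is precisely this non-Fano case: Lu's approach via $\Upsilon(\Delta)$ requires the Fano hypothesis, and while the inclusion-based heuristic of Remark~\ref{heur} makes the equality plausible (the trapezoid fits in a rectangle of the same lattice width, whose corresponding symplectic manifold has Gromov width equal to that width), making this rigorous demands more than the combinatorial arguments supplied by Theorem~\ref{lowerbd} and Proposition~\ref{equality-widths}.
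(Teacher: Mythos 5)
Your lower bound is fine, and your treatment of the Fano cases ($\mathbb{P}^2$, $\mathbb{P}^1\times\mathbb{P}^1$, $\mathcal{H}_1$) via Theorem~\ref{lu-two} and Proposition~\ref{equality-widths} matches the paper. The genuine gap is the upper bound for the non-Fano Hirzebruch surfaces $\mathcal{H}_a$ with $a\ge 2$: you correctly identify this as the obstacle but then defer to an unspecified external computation, which leaves the proof incomplete as written. The paper closes exactly this case with a tool you overlooked: Lu's \emph{other} upper bound, Theorem~\ref{lu-one}, which bounds the Gromov width by $\Lambda(\Delta)$ and requires no Fano hypothesis.

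Concretely, in your normalization the four primitive inner facet normals of the trapezoid are $(0,1)$, $(0,-1)$, $(1,0)$ and $(-1,a)$, with support numbers $0$, $\beta$, $0$ and $\alpha$ respectively. The only nontrivial nonnegative integer relation $\sum_k a_k\vu_k=\vo$ with $\sum_k a_k\le d+1=3$ is $(0,1)+(0,-1)=\vo$ (the relation $(1,0)+(-1,a)+a\,(0,-1)=\vo$ needs $2+a\ge 4$ summands when $a\ge 2$), so $\Lambda(\Delta)=0+\beta=\beta=\width(\Delta)$ by Definition~\ref{def-lu}. Combined with your lower bound this finishes the case $a\ge 2$ entirely within the paper's combinatorial framework; no appeal to Karshon's classification or to non-squeezing beyond Theorem~\ref{lu-one} is needed.
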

\begin{proof} 
  The cases $X = \mathbb{P}^2$ and $X = \mathbb{P}^1 \times \mathbb{P}^1$ are straightforwardly verified. It remains to check the case of Hirzebruch surfaces.
  
  We may assume that $\Delta$ is a $4$-gon with vertices $\begin{psmallmatrix}0\\0\end{psmallmatrix}, \begin{psmallmatrix}x\\0\end{psmallmatrix}, \begin{psmallmatrix}0\\y\end{psmallmatrix}, \begin{psmallmatrix}x\\y-ax\end{psmallmatrix}$ with $y > ax > 0$. As $y > x$, the facet width (hence, the lattice width by Lemma~\ref{lw=fw}) of $\Delta$ equals $x$. As $\Delta$ contains $x \Delta_2$, Theorem~\ref{lowerbd} implies that the Gromov width is at least $x$. For the reverse inequality, we distinguish two cases. If $a=1$, then $X$ is Fano, hence, the statement follows by Theorem~\ref{lu-two} and Proposition~\ref{equality-widths} (this was also proven in \cite[p.~206]{SchlenkBook}). If $a >1$, we use Theorem~\ref{lu-one} to bound the Gromov width from above. Indeed, the only nontrivial linear combination (with nonnegative integer coefficients) of ray generators in the inner normal fan of $\Delta$ that sum up to $\begin{psmallmatrix}0\\0\end{psmallmatrix}$ and have at most three summands, is $\begin{psmallmatrix}-1\\0\end{psmallmatrix} + \begin{psmallmatrix}1\\0\end{psmallmatrix} = \begin{psmallmatrix}0\\0\end{psmallmatrix}$, and thus, $\Lambda(\Delta) = x$ by Definition~\ref{def-lu}.
\end{proof}

We couldn't find the following observation on toric surfaces of small Gromov width in the literature.

\begin{prop}
  \label{gw1}
  If $(X,\omega)$ is a symplectic toric surface whose moment polytope $\Delta$ is a Delzant lattice polygon, then its Gromov width equals to $1$ if and only if the lattice width of $\Delta$ is $1$. In this case, $X \cong \mathbb{P}^2$, $X \cong \mathbb{P}^1 \times \mathbb{P}^1$ or $X$ is a Hirzebruch surface.
\end{prop}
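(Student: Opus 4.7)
My plan is to prove both implications of the biconditional; the identification of $X$ will then follow from the classification of Delzant lattice polygons of width $1$ obtained along the way.

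For the ``if'' direction, assume $\width(\Delta) = 1$. By Lemma~\ref{lw=fw} the facet width of $\Delta$ also equals $1$, so after a unimodular transformation $\Delta \subseteq \R \times [0,1]$ with a facet on the line $y = 0$. Since $\Delta$ is a lattice polygon, all its vertices lie in $\Z \times \{0, 1\}$. If the line $y = 1$ carries only one vertex, then $\Delta$ is a triangle, and the Delzant condition at the apex forces the bottom edge to have lattice length $1$; hence $\Delta$ is unimodularly equivalent to $\Delta_2$ and $X \cong \mathbb{P}^2$. Otherwise both lines carry at least two vertices and $\Delta$ is a trapezoid; the Delzant condition is then automatic because the primitive edge joining the two lines has the form $(m, \pm 1)$, which together with $(\pm 1, 0)$ forms a lattice basis. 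The resulting trapezoid is either a rectangle, giving $X \cong \mathbb{P}^1 \times \mathbb{P}^1$, or genuine, giving a Hirzebruch surface $\mathcal{H}_a$. In all three cases Lemma~\ref{minimal} yields Gromov width $= \width(\Delta) = 1$.

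For the ``only if'' direction I argue the contrapositive: if $\width(\Delta) \ge 2$, then the Gromov width is at least $2$. By Theorem~\ref{thm:cross} it suffices to place an $\R$-unimodular copy of a diamond of size $2$ inside $\Delta$. I would choose any vertex $v$ of $\Delta$ and, via a unimodular transformation, assume $v = \vo$ with adjacent edges along $\ve_1$ and $\ve_2$ of lattice lengths $\ell_1, \ell_2 \in \Z_{\ge 1}$. When $\ell_1, \ell_2 \ge 2$, the corner triangle $\conv(\vo, 2 \ve_1, 2 \ve_2) \subseteq \Delta$ is itself a diamond of size $2$ (simplex type, $(k_i, l_i) = (2, 0)$). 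Otherwise some edge, say along $\ve_1$, has length $1$, and the Delzant condition at the neighbour $\ve_1$ forces the next primitive edge direction there to be $(r, 1)$ for some $r \in \Z$; combined with the hypothesis $\width(\Delta) \ge 2$ applied to suitable primitive linear functionals, this locates enough room in $\Delta$ to fit a translated crosspolytope or intermediate diamond of size $2$.

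The main technical obstacle lies in the short-edge case, as different configurations of short edges around $v$ call for different diamond shapes $(k_1, l_1; k_2, l_2)$. I would address this systematically by invoking the classification of smooth complete toric surfaces as iterated blow-ups of the three minimal models (cf.~\cite[p.~43]{Fulton}): edges of lattice length $1$ correspond precisely to exceptional divisors of blow-ups at toric fixed points, so a diamond of size $2$ in the minimal model, guaranteed by Lemma~\ref{minimal} together with Theorem~\ref{lowerbd}, can be tracked through subsequent corner cuts while the hypothesis $\width(\Delta) \ge 2$ is preserved. Once the diamond is exhibited, Theorem~\ref{thm:cross} gives Gromov width $\ge 2$ and the contrapositive is complete.
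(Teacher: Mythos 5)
Your ``if'' direction is fine: the elementary classification of width-one Delzant lattice polygons (unimodular triangle or trapezoid in a unit strip) is correct, and combined with Lemma~\ref{minimal} it gives Gromov width $1$ together with the identification of $X$; this is essentially the first half of the paper's proof, obtained there by citing the classification of lattice polygons without interior lattice points.

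The ``only if'' direction, however, has a genuine gap. You only complete the case where some vertex has both incident edges of lattice length at least $2$; the remaining case is exactly what you call the ``main technical obstacle,'' and the strategy you propose for it --- realizing $X$ as an iterated blow-up of a minimal model and ``tracking'' a size-$2$ diamond through the corner cuts --- is not carried out and is not routine. A corner cut can slice into any previously placed diamond and can also decrease the lattice width, so the diamond must be re-chosen after each cut and there is no obvious induction hypothesis; this re-choosing is the entire difficulty, not a bookkeeping step. Nor is the unproved case vacuous: the Delzant hexagon $\conv\bigl((0,0),(1,0),(0,1),(2,1),(1,2),(2,2)\bigr)$ has width $2$ but every edge has lattice length $1$, so your corner-triangle construction fails at every vertex. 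The paper closes this case differently: it splits on whether $\Delta$ has an interior lattice point. If not, the classification of hollow lattice polygons leaves only $2\Delta_2$ (where both widths equal $2$) and the width-one trapezoids. If so, after normalizing a vertex to $\vo$ with edge directions $\ve_1,\ve_2$, convexity gives $(1,1)$ as an interior lattice point, and the Delzant condition at the two neighbouring vertices forces $(2,1)$ and $(1,2)$ to lie in $\Delta$; hence the explicit diamond $\conv\bigl((1,0),(1,2),(2,1),(0,1)\bigr)$ of size $2$ sits inside $\Delta$ and Theorem~\ref{thm:cross} applies. If you want to salvage your approach, replacing the hypothesis ``$\width(\Delta)\ge 2$'' by ``$\Delta$ has an interior lattice point'' (the two differ only for $2\Delta_2$) is the key move that makes the local argument at a single vertex go through uniformly, with no case analysis on edge lengths.
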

\begin{proof}
  If $\Delta$ has no interior lattice points, then it is well known (e.g., see \cite[Theorem 2]{Schicho}) that there are two cases. Either, $\Delta \cong 2 \Delta_2$, so lattice width and Gromov width are both $2$ (e.g. by Theorems~\ref{lowerbd} and \ref{lu-two} together with Proposition \ref{equality-widths}). Or, $\Delta \cong \conv(\begin{psmallmatrix}0\\0\end{psmallmatrix}, \begin{psmallmatrix}1\\0\end{psmallmatrix}, \begin{psmallmatrix}0\\y\end{psmallmatrix}, \begin{psmallmatrix}1\\y-a\end{psmallmatrix})$ with $y \geq a \geq 0$, i.e., $X \cong \mathbb{P}^2$, $X \cong \mathbb{P}^1 \times \mathbb{P}^1$, or $X$ is a Hirzebruch surface (note that we only consider the case where $\Delta$ is a smooth polygon). Hence, the lattice width and the Gromov width both equal $1$ (see Lemma~\ref{minimal}).

  Suppose that $\Delta$ has an interior lattice point. We may assume that $\vo$ is a vertex of $\Delta$ with edge directions $\ve_1, \ve_2$. As $\Delta$ is a Delzant lattice polygon with interior lattice points, convexity implies that $\begin{psmallmatrix}1\\1\end{psmallmatrix} \in \Delta$. Moreover, it is easy to see that $\begin{psmallmatrix}1\\1\end{psmallmatrix}$ cannot be on the boundary, so it is a lattice point in the interior of $\Delta$. Consider the edge with vertex $\vo$ and edge direction $\ve_1$. Let $\vw_1$ be the other vertex on that edge. As $\Delta$ is a Delzant lattice polygon, there exists a lattice point $\vw'_1$ of $\Delta$ that lies in a common edge with $\vw_1$ and has second coordinate $1$. By convexity and as $\begin{psmallmatrix}1\\1\end{psmallmatrix}$ is not on the boundary of $\Delta$, this implies that $(2,1) \in \Delta$. In the same way, one proves that $(1,2) \in \Delta$. This shows that the diamond $\conv(\begin{psmallmatrix}1\\0\end{psmallmatrix},\begin{psmallmatrix}1\\2\end{psmallmatrix},\begin{psmallmatrix}2\\1\end{psmallmatrix},\begin{psmallmatrix}0\\1\end{psmallmatrix})$ of size $2$ is in $\Delta$, and thus the Gromov width is at least $2$ by Theorem~\ref{thm:cross}.
\end{proof}


\section{Proof of Theorem~\ref{thm:flatness}}
\label{sec:flatproof}

The proof uses general results on successive minima and covering minima to reduce the problem to translates of parallelepipeds. 

Let us recall the following standard notions in discrete geometry. The \emph{Minkowski sum} of two subsets $A, B \subseteq \R^d$ is given by $A + B = \{ a + b \with a \in A, b \in B \}$ and it can be recursively extended to finite families $A_1, \ldots, A_k \subseteq \R^d$ in which case we write $\sum_{i=1}^k A_i$. For a convex body $K \subseteq \R^d$, the $d$-th \emph{successive minimum} of its \emph{difference body} $K-K = \{ x - y \with x, y \in K\}$ is defined as follows:
\[
  \lambda_d(K-K) \coloneqq \inf \rleft\{ \lambda>0 \colon \dim \rleft( \lspan \rleft\{ \lambda (K-K) \cap \Z^d \rright\} \rright) = d \rright\} \text{,}
\]
while the definition of the $d$-th \emph{covering minimum} of $K$ is given by:
\[
  \mu_d(K) \coloneqq \inf \rleft\{  \mu > 0 \colon \mu K + \Z^d = \R^d \rright\}.
\]

\begin{lem}
  \label{lem:KL}
  Let $K \subseteq \R^d$ be a convex body. Then we have $\lambda_d ( K - K ) \le \tfrac{\flt(d)}{\width(K)}$.
\end{lem}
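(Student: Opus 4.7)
The plan is to prove the lemma by chaining two inequalities: $\mu_d(K) \le \flt(d)/\width(K)$, which is an immediate consequence of flatness, and the purely geometric inequality $\lambda_d(K - K) \le \mu_d(K)$, which compares the $d$-th successive minimum of the difference body with the $d$-th covering minimum. Concatenating them delivers the desired bound.

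For the first inequality I would argue by contradiction. Suppose $\mu > \flt(d)/\width(K)$ but $\mu K + \Z^d \ne \R^d$, and pick $p \in \R^d \setminus (\mu K + \Z^d)$. Then the convex body $p - \mu K$ is lattice-free, because a lattice point $v \in \Z^d \cap (p - \mu K)$ would give $p \in v + \mu K \subseteq \mu K + \Z^d$. The flatness theorem applied to $p - \mu K$ yields $\width(p - \mu K) \le \flt(d)$, while translation- and reflection-invariance of the lattice width give $\width(p - \mu K) = \mu \cdot \width(K) > \flt(d)$, a contradiction. Hence $\mu_d(K) \le \flt(d)/\width(K)$.

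For the second inequality, fix any $\mu > \mu_d(K)$, so that the family $\{\mu K + v\}_{v \in \Z^d}$ is a closed, locally finite cover of $\R^d$ by compact convex sets. Form the graph $G$ on the vertex set $\Z^d$ by joining $v$ and $w$ whenever $(\mu K + v) \cap (\mu K + w) \ne \emptyset$, which by convexity is equivalent to $w - v \in \mu(K - K)$; in particular the edge-difference set is exactly $\Z^d \cap \mu(K - K)$. Because $\R^d$ is path-connected, any two of these translates can be linked by a straight segment, which is compact and therefore hit by only finitely many members of the cover; these finitely many convex sets covering a connected segment intersect in a chain, producing a path in $G$ from $v$ to $w$. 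So $G$ is connected, and a symmetric subset of $\Z^d$ whose Cayley graph is connected must generate $\Z^d$ as an abelian group and therefore span $\R^d$ as a real vector space. Applied to $\Z^d \cap \mu(K - K)$ this gives $\lambda_d(K - K) \le \mu$, and letting $\mu \searrow \mu_d(K)$ completes the step.

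The main obstacle I anticipate is making the connectivity of $G$ fully rigorous; the plan is to justify it by the finite-subcover-of-a-path argument sketched above, i.e., the standard fact that the intersection graph of a closed locally finite cover of a connected space by convex sets is connected. The hint in the section preamble suggests that the authors instead reduce the problem to translates of parallelepipeds, presumably leveraging the sharper comparison between successive and covering minima available for affine images of $[0,1]^d$; the two-step route outlined here bypasses that reduction entirely but lands on the same inequality.
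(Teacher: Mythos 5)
Your proof is correct and follows the same two-step decomposition as the paper: $\lambda_d(K-K) \le \mu_d(K) \le \tfrac{\flt(d)}{\width(K)}$. The only real difference is that the paper takes the first inequality off the shelf, citing \cite[Lemma~2.4]{KL:CovMin}, whereas you reprove it via the connectivity of the intersection graph of the cover $\{\mu K + v\}_{v \in \Z^d}$; that argument is sound (local finiteness of the cover plus the closed-cover connectivity argument on a segment does yield that $\Z^d \cap \mu(K-K)$ generates $\Z^d$, hence contains $d$ linearly independent vectors), but it amounts to supplying a folklore proof of the cited result rather than a new route. Your treatment of the flatness step --- working with $p - \mu K$ for $\mu$ strictly above the threshold and deriving a contradiction --- is, if anything, marginally cleaner than the paper's, which applies flatness to a body of width exactly $\flt(d)$ and thus implicitly leans on the supremum in the definition of $\flt(d)$ not causing trouble at the boundary. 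One small misreading at the end: the ``reduction to translates of parallelepipeds'' announced in the section preamble refers to the later lemma producing a unimodular simplex inside a (translated) lattice parallelepiped, not to the present lemma, so there is no sharper parallelepiped-specific comparison of minima being invoked here.
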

\begin{proof}
  We set $\tau \coloneqq \tfrac{\flt(d)}{\width(K)}$. By \cite[Lemma 2.4]{KL:CovMin}, $\lambda_d(K - K) \le \mu_d(K)$. It suffices to show $\mu_d(K) \le \tau$. For this, let $\vect{x} \in \R^d$ and set $K' \coloneqq \tau K - \vect{x}$. As $\width\rleft( K' \rright) = \flt(d)$, there is a lattice point $\vect{y}$ in $K'$, i.e.~$\vect{y} = \vect{z} - \vect{x}$ for some $\vect{z} \in \tau K$. Thus $\vect{x} \in \tau K + \Z^d$.
\end{proof}

Given two points $\vv, \vw \in \R^d$, we write $\rleft[ \vv, \vw \rright]$ for the line segment between those two points, i.e., $\rleft[ \vv, \vw \rright] = \conv \rleft( \vv, \vw \rright)$. 

Let us recall the following folklore result. For the reader's convenience, we include a proof.

\begin{lem}
  \label{lem:aff-basis}
  Let $\vv_1, \ldots, \vv_d \in \Z^d$ be linearly independent. If $\va \in \Z^d$ (resp.~$\va \in \R^d$), then the lattice parallelepiped $P \coloneqq \va + \sum_{i=1}^d \rleft[ \vect{0}, \vv_i \rright]$ (resp.~the parallelepiped $P \coloneqq \va + \sum_{i=1}^d \rleft[ \vect{0}, 2 \vv_i \rright]$) contains a unimodular copy of the standard simplex $\Delta_d$.
\end{lem}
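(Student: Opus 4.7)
My plan is to prove the first claim (where $\va \in \Z^d$) by induction on $d$ using a Hermite-normal-form reduction and a slicing argument. In fact I prove the slightly stronger statement: after translating so that $\va = \vo$, the parallelepiped $P = \sum_{i=1}^d [\vo, \vv_i]$ contains lattice points $\vw_1, \ldots, \vw_d$ forming a $\Z$-basis of $\Z^d$; then $\conv(\vo, \vw_1, \ldots, \vw_d) \subseteq P$ is a unimodular copy of $\Delta_d$. The second claim will follow by translating the real base point $\va$ to a nearby lattice point.

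For the strengthened first claim, I left-multiply $V \coloneqq [\vv_1 \mid \cdots \mid \vv_d]$ by a suitable $U \in \GL(d, \Z)$ (Hermite normal form), so I may assume $V$ is lower triangular with $v_{ii} \geq 1$. The base case $d=1$ is handled by $\vw_1 = \ve_1 \in [\vo, v_{11} \ve_1]$. For the inductive step, the slice $P \cap \{x_1 = 0\}$ equals $\sum_{i \geq 2}[\vo, \vv_i]$ because each $\vv_i$ with $i \geq 2$ has vanishing first coordinate. Applying the inductive hypothesis to the $(d-1) \times (d-1)$ submatrix $V_{22}$ of $V$ (obtained by deleting the first row and column) yields lattice vectors $\vw_2, \ldots, \vw_d \in P \cap \{x_1 = 0\}$ forming a $\Z$-basis of $\{0\} \times \Z^{d-1}$. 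It remains to produce a lattice vector $\vw_1 \in P$ with first coordinate $1$. Writing $P \cap \{x_1 = 1\} = \tfrac{1}{v_{11}} \vv_1 + \sum_{i \geq 2} [\vo, \vv_i]$, the existence of $\vw_1$ reduces to showing that a certain coset of the lattice $V_{22}^{-1} \Z^{d-1}$ meets $[0, 1]^{d-1}$. Since $V_{22}$ is integer-valued, $V_{22}^{-1} \Z^{d-1} \supseteq \Z^{d-1}$, and since $\Z^{d-1}$-translates of $[0,1]^{d-1}$ already cover $\R^{d-1}$, every coset meets $[0,1]^{d-1}$. The matrix $[\vw_1 \mid \cdots \mid \vw_d]$ is then block lower triangular with diagonal blocks of determinants $1$ and $\pm 1$, so $\vw_1, \ldots, \vw_d$ is a $\Z$-basis of $\Z^d$.

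For the second claim, I reduce to the first. The translated half-open parallelepiped $\va + V[0,1)^d$ is a fundamental domain for $V\Z^d$, so it contains exactly $[\Z^d : V\Z^d] = |\det V| \geq 1$ lattice points; pick $\va' \in \Z^d \cap (\va + V[0,1]^d)$. Since $\va' - \va \in V[0,1]^d$, a direct computation gives $\va' + V[0,1]^d \subseteq \va + V[0,2]^d = P$. Applying the first claim to the lattice parallelepiped $\va' + \sum_{i=1}^d [\vo, \vv_i] \subseteq P$ then produces the desired unimodular copy of $\Delta_d$ inside $P$.

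The main obstacle is the lifting step of the induction, i.e.\ producing $\vw_1$ in the slice $\{x_1 = 1\}$: this is the point where the integrality of $V$ is used essentially, via the inclusion $V_{22}^{-1} \Z^{d-1} \supseteq \Z^{d-1}$. Once $\vw_1$ has been found, the determinant computation and the reduction of the second claim are routine.
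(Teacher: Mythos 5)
Your proof is correct. It takes a genuinely different route from the paper's, although both are inductions on $d$ that slice the parallelepiped along a coordinate direction. The paper normalizes so that $\vv_1,\ldots,\vv_{d-1}\in\Z^{d-1}\times\{0\}$, finds two consecutive integers $k$, $k+1$ in the projection of $P$ to the last coordinate, applies the induction hypothesis to the $(d-1)$-dimensional slices at both levels, and assembles the simplex from a unimodular $(d-1)$-simplex at level $k$ together with a single vertex at level $k+1$; the integral and real cases are run through one uniform induction, with the edge-doubling encoded in a parameter $\tau\in\{1,2\}$. You instead prove the stronger statement that $P-\va$ contains a lattice basis of $\Z^d$, reduce $V$ to triangular form by a left unimodular factor, invoke the induction hypothesis only on the slice $\{x_1=0\}$ (which is again a lattice parallelepiped through the origin), and produce the remaining basis vector in $\{x_1=1\}$ by the direct covering argument $V_{22}^{-1}\Z^{d-1}\supseteq\Z^{d-1}$; the real case is then reduced outright to the integral one by locating a lattice point $\va'$ in the fundamental domain $\va+V[0,1)^d$ and observing $\va'+V[0,1]^d\subseteq\va+V[0,2]^d$. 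Your version buys a stronger conclusion (a lattice basis inside $P$, hence a unimodular simplex with apex $\va$) and a cleaner separation of the two cases: in particular it sidesteps the point, acknowledged in the paper, that the base points $\va+\vv'$, $\va+\vv''$ of the slices need not be lattice points even when $\va$ is, so that the paper's appeal to the induction hypothesis in the integral case has to be read with some care. The cost is the Hermite-normal-form preprocessing; the paper only needs the weaker normalization that $\vv_1,\ldots,\vv_{d-1}$ span a coordinate hyperplane.
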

Note the stronger assumptions in the case of real translates of lattice parallelepipeds.
\begin{proof}
  To simplify notation set $\tau = 1$ or $\tau = 2$ depending on whether $\va \in \Z^d$ or $\va \in \R^d$ respectively. The basic idea of the proof is independent of the case which we are considering: We proceed by induction on the dimension $d$. The crucial difference in the argument is evident in dimension $d=1$: If $\va \in \Z^d$, then it suffices to observe that any interval with integral endpoints contains two consecutive integer points. However, if $\va \in \R^d$, then it is enough to assume that $P$ is a segment of length at least $2$ to ensure that $P$ contains two consecutive integer points.
    
  Now, let $d \ge 2$. By applying an appropriate (linear) unimodular transformation, we may assume $\vv_1, \ldots, \vv_{d-1} \in \Z^{d-1} \times \rleft\{ \vect{0} \rright\}$. Consider the projection $\pi \colon \R^d \to \R$ onto the last coordinate, i.e., given by $\pi \rleft( x_1, \ldots, x_d \rright) = x_d$. We have $\pi(P) = \pi(\va) + \rleft[ \vect{0}, \tau \cdot \pi\rleft( \vv_d \rright) \rright]$ with $0 \neq \pi \rleft( \vv_d \rright) \in \Z$. As $\pi(P)$ is a segment, it follows by the base case of the induction that it contains two consecutive integers, say $k$ and $k+1$. Thus (as $\vv_1, \ldots, \vv_{d-1}$ have last coordinate $0$) there are $\vv',\vv'' \in \rleft[ \vect{0}, \tau\cdot \vv_d \rright]$ such that $\pi \rleft( \va + \vv' \rright) = k$ and $\pi \rleft( \va + \vv'' \rright) = k+1$. We note that $\va + \vv'$, $\va + \vv''$ need not to be lattice points.
  
  We apply the induction hypothesis to $P' \coloneqq \va + \vv' + \sum_{i=1}^{d-1} \rleft[ \vect{0}, \tau \cdot \vv_i \rright] \subseteq \R^{d-1} \times \{k\}$ and deduce the existence of a unimodular copy $\vp'_0,\ldots, \vp'_{d-1} \in P'$ of $\vect{0}, \vect{e}_1, \ldots, \vect{e}_{d-1}$. Analogously, the induction hypothesis yields that the set $P'' \coloneqq \va + \vv'' + \sum_{i=1}^{d-1} \rleft[ \vect{0}, \tau\cdot \vv_i \rright] \subseteq \R^{d-1} \times \{k+1\}$ also contains a unimodular copy $\vp''_0, \ldots, \vp''_{d-1}$ of $\vect{0}, \ve_1, \ldots, \ve_{d-1}$. By construction, for any $i \in \{0, \ldots, d-1\}$ the vectors $\vp'_0, \ldots, \vp'_{d-1}, \vp''_i$ form a unimodular copy of $\vect{0}, \ve_1, \ldots, \ve_d$ contained in $P$.
\end{proof}

\begin{rem}
  Clearly, the previous proof implies more than what we need. For instance, it can be easily modified to show that any such parallelepiped $P$ has at least $2^d$ many lattice points. However, the reader is cautioned not to jump to the conclusion that it proves the existence of a unimodular copy of the unit cube $[0,1]^d$ in $P$.
\end{rem}

\begin{proof}[Proof of Theorem~\ref{thm:flatness}]
  (1)  By Lemma \ref{lem:KL}, $\lambda_d(K - K) \le \frac{1}{2d}$, i.e., there exist $d$ segments $I_i = [\va_i,\vb_i]$ with $i \in \{1,\ldots,d\}$ contained in $\frac{1}{2d} K$ such that the $d$ vectors $\vv_i \coloneqq \vb_i - \va_i$ are linearly independent and belong to $\Z^d$. This implies $I_1 + \cdots + I_d \subseteq \frac{1}{2} K$. Hence,
   \[
   2\rleft( I_1 + \cdots + I_d \rright) \subseteq K \text{.}
  \]
  The latter inclusion can be written as
  \[
    \va + \sum_{i=1}^d \rleft[ \vect{0}, 2 \vv_i \rright] \subseteq K
  \]
  with $\va \coloneqq 2 \rleft( \va_1 + \ldots + \va_d \rright)$. The statement follows from Lemma \ref{lem:aff-basis}.

  (2) Again by Lemma \ref{lem:KL}, $\lambda_d(K - K) \le \frac{1}{d}$, i.e., there exist $d$ segments $I_i = [\va_i,\vb_i]$ with $i \in \{1,\ldots,d\}$ contained in $\frac{1}{d} K$ such that the $d$ vectors $\vv_i \coloneqq \vb_i - \va_i$ are linearly independent and belong to $\Z^d$. This implies $I_1 + \cdots + I_d \subseteq K$. Hence, 
  \[
   \va + \sum_{i=1}^d \rleft[ \vect{0}, \vv_i \rright] \subseteq K
  \]
  with $\va \coloneqq \va_1 + \ldots + \va_d$. The statement follows from Lemma \ref{lem:aff-basis}.
\end{proof}


\section{Proof of Proposition~\ref{prop:example}}
\label{sec:exampleproof}

For given $k \in \N$ with $k \ge 3$, we define the three-dimensional lattice polytope
\[
  P \coloneqq \conv( \{(3,0,-1),(0,2,-1)\} \cup [0,k]^3) \qquad \text{(see Figure \ref{fig:3d_polytope})}.
\]
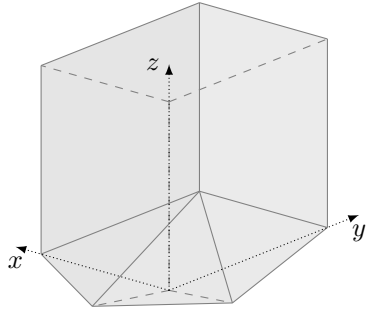
\begin{figure}[!ht]
  \begin{tikzpicture}[rotate around x = 180, rotate around y = -60,scale=0.5]
  \def\k{5}
    \fill[opacity=0.2,fill=gray] (0,0,\k) -- (\k,0,\k) -- (\k,\k,\k) -- (0,\k,\k) -- cycle;
    \fill[opacity=0.2,fill=gray] (0,\k,\k) -- (\k,\k,\k) -- (0,\k+1,3) -- cycle;
    \fill[opacity=0.2,fill=gray!75] (\k,0,0) -- (\k,0,\k) -- (\k,\k,\k) -- (\k,\k,0) -- cycle;  
    \fill[opacity=0.2,fill=gray!90] (\k,\k,\k) -- (0,\k+1,3) -- (2,\k+1,0) -- cycle;
    \fill[opacity=0.2,fill=gray!80] (\k,\k,\k) -- (2,\k+1,0) -- (\k,\k,0) -- cycle;

    \draw[gray,thin,dashed] (0,0,0) -- (\k,0,0);
    \draw[gray,thin,dashed] (0,0,0) -- (0,\k,0);
    \draw[gray,thin,dashed] (0,0,0) -- (0,0,\k);
    \draw[gray,thin,dashed] (0,\k,0) -- (0,\k+1,3);
    \draw[gray,thin,dashed] (0,\k,0) -- (2,\k+1,0);
    \draw[gray] (0,\k+1,3) -- (\k,\k,\k);
    \draw[gray] (0,\k+1,3) -- (2,\k+1,0);
    \draw[gray] (2,\k+1,0) -- (\k,\k,0);
    \draw[gray] (2,\k+1,0) -- (\k,\k,\k);
    \draw[gray] (\k,\k,\k) -- (\k,\k,0);
    \draw[gray] (\k,\k,\k) -- (\k,0,\k);
    \draw[gray] (\k,0,0) -- (\k,0,\k);
    \draw[gray] (0,0,\k) -- (\k,0,\k);
    \draw[gray] (\k,0,0) -- (\k,\k,0);
    \draw[gray] (0,\k,\k) -- (\k,\k,\k);
    \draw[gray] (0,\k,\k) -- (0, \k+1,3);
    \draw[gray] (0,0,\k) -- (0,\k,\k);

    \draw[-latex,thin,densely dotted] (0,\k,0) -- (\k+1,\k,0) node[below] {$y$};
    \draw[-latex,thin,densely dotted] (0,\k,0) -- (0,-1,0) node[left] {$z$};
    \draw[-latex,thin,densely dotted] (0,\k,0) -- (0,\k,\k+1) node[below] {$x$};
  \end{tikzpicture}
  \caption{The polytope $P$ for $k=5$.}
  \label{fig:3d_polytope}
\end{figure}

We will show that this polytope $P$ of width $k$ has the desired property. Then for $d > 3$, one simply takes the Cartesian product of $P$ with $[0,k]^{d-3}$. 

Let $t \in \N$ with $t \ge 2$. Let us abbreviate $Z_1 \coloneqq \Z^3 \cap P$, and 
\[
  Z_t \coloneqq \underbrace{Z_1 + \cdots + Z_1}_{t \; \text{times}}.
\]
We will show that the lattice point
\[
  p \coloneqq (3t-4,1,1-t)
\]
satisfies $p \in \conv(Z_t) = t P$ but $p \not\in Z_t$. 

For this, we consider
\[
  Z'_t \coloneqq \setcond{\vx \in \Z^2}{(\vx,1-t) \in Z_t}.
\] 
In order to determine $Z'_t$, we observe that the last coordinates of the points in $Z_1$ comprise the set $\{-1,0,\ldots,k\}$. We thus need to determine the possibilities for values $s_1,\ldots,s_t \in \{-1,0,\ldots,k\}$ such that the sum $s_1 + \cdots + s_t$ is equal to $1-t$. It is easily seen that one of the values $s_1,\ldots,s_t$ has to be equal to $0$ and the remaining ones to $-1$ for the latter to be fulfilled. Since $(3,0,-1)$ and $(0,2,-1)$ are the only two points in $Z_1$ with last component $-1$ and $\{0,\ldots,k\}^2 \times \{0\}$ is the set of all points of $Z_1$ with last component $0$, we get
\begin{equation}
  \label{eq:L}
  Z'_t  = \underbrace{\{(3,0),(0,2)\} + \cdots + \{(3,0),(0,2)\}}_{(t-1) \; \text{times}} + \{0,\ldots,k\}^2 \text{.}
\end{equation}

We observe that
\[
  (3t-4,1) = \frac{1}{2} \left((t-1)(3,0)+(0,0)\right)+\frac{1}{2}\left((t-2)(3,0)+(0,2)+(1,0)\right) \in \conv(Z'_t) \text{.}
\]
Thus, $p \in \conv(Z_t)$. 

Finally, we notice that $(3t-4,1)=(3(t-1)-1,1) \not\in Z'_t$. Otherwise, it could be expressed as a sum as on the right side of \eqref{eq:L}. However, as the second component of $\{0,\ldots,k\}^2$ is nonnegative, no $(0,2)$ would be allowed as a summand, so it would be a sum of $(t-1) (3,0)$ and a point in $\{0,\ldots,k\}^2$, a contradiction. Therefore, $p \not\in Z_t$. \hfill$\qed$

\newcommand{\etalchar}[1]{$^{#1}$}
\providecommand{\bysame}{\leavevmode\hbox to3em{\hrulefill}\thinspace}

\end{document}